\newtheorem{theorem}{Theorem}[section]
\newtheorem{lemma}[theorem]{Lemma}
\theoremstyle{definition}
\newtheorem{proposition}[theorem]{Proposition}
\newtheorem{problem}{Problem}
\theoremstyle{remark}
\newtheorem{remark}[theorem]{Remark}
\numberwithin{equation}{section}
\newcommand{\bdot}{\boldsymbol{\cdot}}
\begin{document}

\title[A reciprocity on finite abelian groups]
{A reciprocity on finite abelian groups involving zero-sum sequences}

\begin{abstract}
In this paper, we present a reciprocity on finite abelian groups involving zero-sum sequences. Let $G$ and $H$ be finite abelian groups with $(|G|,|H|)=1$. For any positive integer $m$, let $\mathsf M(G,m)$ denote the set of all zero-sum sequences over $G$ of length $m$. We have the following reciprocity
$$|\mathsf M(G,|H|)|=|\mathsf M(H,|G|)|.$$
Moreover, we provide a combinatorial interpretation of the above reciprocity using ideas from rational Catalan combinatorics. We also present and explain some other symmetric relationships on finite abelian groups with methods from invariant theory. Among others, we partially answer a question proposed by Panyushev in a generalized version.
\end{abstract}

\author{Dongchun Han}
\address{Department of Mathematics, Southwest Jiaotong University, Chengdu 610000, P.R. China}
\email{handongchun@swjtu.edu.cn}
\author{Hanbin Zhang}
\address{School of Mathematics (Zhuhai), Sun Yat-sen University, Zhuhai 519082, Guangdong, P.R. China}
\email{zhanghb68@mail.sysu.edu.cn}

\thanks{AMS subject classifications: 05A19, 11B13, 15A72}

\keywords{zero-sum sequences; finite abelian groups; rational Catalan numbers; rational Dyck paths; invariant theory}
\maketitle

\section{Introduction}

Let $G$ be an additive finite abelian group. By a sequence over $G$, we mean a finite sequence of terms from $G$ which is unordered and repetition of terms is allowed (see Section 2 for more discussion). Let $S=g_1\bdot\ldots\bdot g_k$ be a sequence over $G$, where $g_1,\ldots,g_k\in G$ and $k$ is called the length of $S$. We define $\sigma(S)=g_1+\cdots+g_k$. We say that $S$ is a zero-sum sequence if $\sigma(S)$ equals 0, the identity of $G$. In fact, zero-sum theory, an active branch of combinatorial number theory, studies various interesting problems about zero-sum sequences, we refer to \cite{GG} for a survey on zero-sum theory. In this paper, we first present a reciprocity on finite abelian groups involving zero-sum sequences.

Let $|G|=n$ and $k,m$ be positive integers with $k\le n$. For any $g\in G$, we denote
$$\mathsf N(G,k,g)=\{S\text{ is a subset of }G\text{ }|\text{ }\sigma(S)=g\text{ and }|S|=k\}$$
and
$$\mathsf M(G,m,g)=\{S\text{ is a sequence over }G\text{ }|\text{ }\sigma(S)=g\text{ and }|S|=m\}.$$
In particular, we denote
$$\mathsf N(G,k):=\mathsf N(G,k,0)\text{ and }\mathsf M(G,k):=\mathsf M(G,k,0).$$

In 1975, using generating functions, Fredman \cite{Fred} proved the following very interesting reciprocity
\begin{equation}\label{Fredman}
|\mathsf M(\mathbb Z/n\mathbb Z,m)|=|\mathsf M(\mathbb Z/m\mathbb Z,n)|.
\end{equation}
More generally, he proved that $|\mathsf M(\mathbb Z/n\mathbb Z,m,i)|=|\mathsf M(\mathbb Z/m\mathbb Z,n,i)|,$
where
$$\mathsf M(\mathbb Z/n\mathbb Z,m,i)=\{S\text{ is a sequence over }\mathbb Z/n\mathbb Z\text{ }|\text{ }\sigma(S)\equiv i{\pmod n}\text{ and }|S|=m\}.$$
He also provided a combinatorial explanation of the above reciprocity using a necklace interpretation. In the following, for simplicity, let $C_n$ denote a
cyclic group with $n$ elements and $C_n^r$ the direct sum of $r$ copies of $C_n$.

Later in 1999, Elashvili, Jibladze and Pataraia \cite{EJ,EJP} rediscovered the same result, but they employed a classical idea from invariant theory introduced by Molien \cite{Mol} and a result of Almkvist and Fossum \cite{AlmF}. It was remarked in \cite[Introduction]{EJP} that N. Alon also independently proved (\ref{Fredman}). Meanwhile, G. Andrews, N. Alon and R. Stanley independently obtained the counting formula for $\mathsf M(C_n,m)$; see \cite[Introduction and Section 3]{EJP}. It is natural to ask whether there exists a similar reciprocity (i.e., $|\mathsf M(G,|H|)|=|\mathsf M(H,|G|)|$) for general finite abelian group. We shall show that (see Proposition \ref{GCp}), in general, the above reciprocity does not always hold for any two abelian groups. While, we can prove that it always holds for two abelian groups with $(|G|,|H|)=1$ and we also provide a combinatorial interpretation to explain this phenomenon. Recall that for any positive integers $n,m$ with $(n,m)=1$, the $(n,m)$-Catalan number $\mathsf {Cat}_{n,m}$ is defined as
$$\mathsf {Cat}_{n,m}:=\frac{1}{n+m}\binom{n+m}{n},$$
which is not only a natural generalization of the Catalan numbers $\mathsf {Cat}_{n}:=\mathsf {Cat}_{n,n+1}$ but also related to many problems in combinatorics, representation theory and geometry (see Section 2 for more discussion). A typical object counted by $\mathsf {Cat}_{n,m}$ is the set $\mathcal{D}_{n,m}$ of all $(n,m)$-Dyck paths which is defined as the number of lattice paths from $(0,0)$ to $(n,m)$ which only use unit steps $(1,0)$
or $(0,1)$ and stay above the diagonal line $y=\frac{m}{n}x$. The following theorem is our main result.

\begin{theorem}\label{mainth1}
Let $k,m,n$ be positive integers. Let $G$ and $H$ be two abelian groups. Then we have
\begin{enumerate}

\item If $(|G|,|H|)=1$, there are bijections between
      $$\mathsf M(G,|H|),\text{ }\mathsf M(H,|G|),\text{  and }\mathcal{D}_{|G|,|H|}.$$
      Therefore
      $$|\mathsf M(G,|H|)|=|\mathsf M(H,|G|)|=\mathsf {Cat}_{|G|,|H|}.$$

\item For abelian groups $G=C_n^r$ and $H=C_m^r$ with $(n,m^r)=(n^r,m)$, we have
      $$|\mathsf M(G,|H|)|=|\mathsf M(H,|G|)|.$$

\item For $k< |G|$ with $(k,|G|)=1$, there are bijections between
      $$\mathsf N(G,k),\text{ }\mathsf N(G,|G|-k),\text{ and }\mathcal{D}_{k,|G|-k}.$$
      Therefore
      $$|\mathsf N(G,k)|=|\mathsf N(G,|G|-k)|=\frac{1}{|G|}\binom{|G|}{k}=\mathsf {Cat}_{k,|G|-k}.$$
\end{enumerate}
\end{theorem}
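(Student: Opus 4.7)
The backbone is the Molien-type character identity
\[
|\mathsf M(G,m)| \;=\; \frac{1}{|G|}\sum_{\chi\in\widehat G}[z^m]\prod_{g\in G}\frac{1}{1-\chi(g)z},
\]
with $\widehat G$ the character group. A character $\chi$ of order $d\mid |G|$ takes each $d$-th root of unity exactly $|G|/d$ times, so $\prod_{g}(1-\chi(g)z)=(1-z^d)^{|G|/d}$, and the $m$-th coefficient of its reciprocal vanishes unless $d\mid m$. Combined with $d\mid|G|$ and $(|G|,m)=1$ this forces $d=1$, leaving only the trivial character and
\[
|\mathsf M(G,m)| \;=\; \frac{1}{|G|}\binom{|G|+m-1}{m} \;=\; \mathsf{Cat}_{|G|,m}.
\]
Since this closed form is symmetric in $|G|$ and $m$, both numerical equalities in (1) drop out at once. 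To upgrade these equalities to explicit bijections with $\mathcal D_{|G|,|H|}$, I would invoke the rational cycle lemma: under $(n,m)=1$ the cyclic group of rotations acts freely on length-$(n+m)$ binary words with $n$ east and $m$ north letters, and each orbit contains a unique Dyck path, yielding $\mathsf{Cat}_{n,m}=\binom{n+m}{n}/(n+m)$ together with a canonical bijection from cyclic orbits to Dyck paths.

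\textbf{Plan for Part (2).} The Molien machinery generalises directly to $G=C_n^r$, $H=C_m^r$. For $d\mid n$ the number of characters of $C_n^r$ of exact order $d$ equals the number of elements of that order, $\nu_r(d):=\sum_{e\mid d}\mu(d/e)e^r$, while $\prod_g(1-\chi(g)z)=(1-z^d)^{n^r/d}$ still holds verbatim. This gives
\[
|\mathsf M(C_n^r,m^r)| \;=\; \frac{1}{n^r}\sum_{d\mid(n,m^r)}\nu_r(d)\binom{n^r/d+m^r/d-1}{m^r/d},
\]
and a twin formula for $|\mathsf M(C_m^r,n^r)|$ indexed by divisors of $(n^r,m)$. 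The hypothesis $(n,m^r)=(n^r,m)$ collapses the two divisor sets into one, and a term-by-term comparison reduces to the algebraic identity
\[
\frac{1}{n^r}\binom{a+b-1}{b} \;=\; \frac{1}{d(a+b)}\binom{a+b}{a} \;=\; \frac{1}{m^r}\binom{a+b-1}{a},
\]
with $a=n^r/d$ and $b=m^r/d$, which is immediate from $\binom{a+b-1}{b}=\frac{a}{a+b}\binom{a+b}{a}$.

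\textbf{Plan for Part (3), and main obstacle.} Here the argument is purely combinatorial. The group $G$ acts on $\binom{G}{k}$ by translation, and $\sigma(S+t)=\sigma(S)+kt$. When $(k,|G|)=1$ multiplication by $k$ is a bijection of $G$, which gives two facts at once: each translation orbit of $k$-subsets contains exactly one zero-sum representative (solve $kt=-\sigma(S)$), and every $k$-subset has trivial stabilizer (from $S+h=S$ one reads $kh=0$, whence $h=0$). Every orbit therefore has size $|G|$, giving
\[
|\mathsf N(G,k)| \;=\; \frac{1}{|G|}\binom{|G|}{k} \;=\; \mathsf{Cat}_{k,|G|-k}.
\]
The bijection $\mathsf N(G,k)\to\mathsf N(G,|G|-k)$ is set complementation (producing a $(|G|-k)$-subset of sum $\sigma(G)$) followed by the unique corrective translation, available because $\gcd(|G|-k,|G|)=\gcd(k,|G|)=1$ makes the same orbit argument run symmetrically; composing with the rational cycle bijection of Part (1) carries both sides onto $\mathcal D_{k,|G|-k}$. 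The principal obstacle I foresee is in Part (1): the Molien step gives only a counting identity, and building an intrinsic bijection $\mathsf M(G,|H|)\to\mathcal D_{|G|,|H|}$ for a non-cyclic $G$ requires the target, which sees only $|G|$ and $|H|$, to be reached through some construction that forgets the finer group structure of $G$; that compression step is where I expect the rational Catalan combinatorics advertised in the abstract to do real work.
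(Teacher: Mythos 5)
Your counting arguments are all correct, and for the numerical content they run on the paper's own track: the Molien-type character identity you start from is exactly how the paper derives its closed formulas in Section~3 (via Lemma~3.2 and the Poincar\'e series of $\mathcal S(\mathcal R)\otimes\wedge(\mathcal R)$, specialized to Theorem~3.1), and your term-by-term verification of part~(2) is essentially verbatim the paper's proof of that part. Your part~(3) is in fact a genuinely different and cleaner route to the equalities and to the bijection $\mathsf N(G,k)\to\mathsf N(G,|G|-k)$: the paper encodes subsets as $0$--$1$ words and runs a cycle-lemma rotation argument (using a free translation action only in the special case of Lemma~4.4), whereas your observation that $\sigma(S+t)=\sigma(S)+kt$ with $k$ invertible modulo $|G|$ gives both the orbit count $\frac{1}{|G|}\binom{|G|}{k}$ and the complementation-plus-corrective-translation bijection in two lines. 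The paper also records your "shift by a group element" idea as a quick proof of the equality in part~(1) at the start of Section~4.

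The genuine gap is the one you name yourself: the explicit bijections with $\mathcal D_{|G|,|H|}$ in (1) and with $\mathcal D_{k,|G|-k}$ in (3) when $G$ is not cyclic. The cycle lemma gives a canonical bijection between rotation orbits of words and rational Dyck paths; what is missing is a proof that each rotation orbit contains exactly one word whose multiplicity vector encodes a zero-sum sequence over $G$. For cyclic $G$ a rotation of the word is a translation by a generator and this is immediate; for general $G$ it is not, and note that your translation action in (3) partitions $\binom{G}{k}$ into orbits different from the rotation orbits, so "composing with the rational cycle bijection" does not go through as stated. The paper's fix (Remark~2.1, Lemma~4.1, and the proof of Theorem~1.1) is to label the elements of $G\cong C_{n_1}\oplus\cdots\oplus C_{n_r}$ by the mixed-radix integers $l_g=a_rn_1\cdots n_{r-1}+\cdots+a_2n_1+a_1$ and to characterize zero-sum by $r$ congruences, one modulo each $n_t$. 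The congruence modulo $n_1$ reduces to $\sum_i ix_i\equiv 0\pmod{n_1}$ and is already enforced by the usual cycle-lemma normalization $\sum_i ix_i\equiv 0\pmod{n}$; and for $t\ge 2$, rotating the word by $l\cdot n_1\cdots n_{t-1}$ changes the congruence modulo $n_t$ by $-lm$ while preserving the earlier ones, so $(n_t,m)=1$ lets one solve for the unique correcting rotation one level at a time. That staged rotation is precisely the "compression step" you were looking for, and it is where the paper's bijective proof does its real work.
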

Note that in Theorem \ref{mainth1}.(1), the equality $|\mathsf M(G,|H|)|=|\mathsf M(H,|G|)|$ only depends on the condition that $(|G|,|H|)=1$ (but not the group structures of $G$ and $H$). With the bijections in Theorem \ref{mainth1}.(1), the intuitive symmetry between $\mathcal{D}_{|G|,|H|}$ and $\mathcal{D}_{|H|,|G|}$ provides a combinatorial explanation of the above reciprocity. When $G$ is a cyclic group and $(|G|,m)=1$, while proving a conjecture of Armstrong \cite{AHJ} concerning the average size of the simultaneous core partitions, Johnson \cite[Lemma 27]{John} implicitly obtained a bijection between $\mathsf M(G,m)$ and $\mathcal{D}_{|G|,m}$ (see Remark \ref{Johnsonproof} for more discussion). In fact, the equalities in Theorem \ref{mainth1} can be observed from explicit counting formulas for $\mathsf N(G,k,g)$ and $\mathsf M(G,m,g)$ for arbitrary abelian group $G$ and any positive integers $k< |G|$ and $m$. Formula for $\mathsf N(G,k,g)$ (resp. $\mathsf M(G,m,g)$) was obtained in \cite{LW,Ko} (resp. \cite{MRW}), via sieve method (known as Li-Wan's sieve method) and generating functions. Moreover, in \cite{Pan}, Panyushev implicitly obtained these counting formulas which follow as consequences of some more general works in the study of invariant theory by Molien \cite{Mol} and by Almkvist and Fossum \cite{Alm,AlmF} (see Section 3 for more detailed explanation). When $r=1$, Theorem \ref{mainth1}.(2) simplifies to (\ref{Fredman}). Panyushev \cite[Theorem 4.2]{Pan} obtained a special case of Theorem \ref{mainth1}.(3) when $G$ is cyclic. For any positive integer $m$, let $v_2(m)$ be the 2-adic valuation of $m$. Our following result provides a characterization of the equality $|\mathsf N(G,k)|=|\mathsf N(G,|G|-k)|$.

\begin{theorem}\label{subset-reci}
Let $G\cong C_{n_1}\oplus\cdots\oplus C_{n_r}$ be a finite abelian group and $k$ a positive integer with $k\le |G|-1$. Then we have $|\mathsf N(G,k)|=|\mathsf N(G,|G|-k)|$ if and only if one of the following conditions holds:
\begin{enumerate}

\item $|G|$ is odd;

\item $r\ge2$ and $2|n_{r-1}$;

\item $v_2(k)<v_2(n_r)$.

\end{enumerate}

\end{theorem}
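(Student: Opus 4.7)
My plan is to reduce the claim, via complementation, to a single ``biased'' comparison $|\mathsf N(G,k'',0)|\stackrel{?}{=}|\mathsf N(G,k'',\sigma(G))|$, and then resolve that comparison by a character-sum computation. The bijection $S\mapsto G\setminus S$ carries $\mathsf N(G,k,0)$ onto $\mathsf N(G,|G|-k,\sigma(G))$, where $\sigma(G):=\sum_{g\in G}g$ and $k'':=|G|-k$. Pairing $g\leftrightarrow -g$ reduces $\sigma(G)$ to a sum over the $2$-torsion of $G$, which vanishes unless the Sylow $2$-subgroup $G_2$ is cyclic of even order, in which case $\sigma(G)$ equals the unique involution $t\in G$. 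Translated into invariant factors, $\sigma(G)=0$ precisely when condition (1) or (2) holds, so the equality is automatic in these cases.

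In the remaining regime $G_2\cong C_{2^a}$ with $a:=v_2(n_r)\ge 1$ and $\sigma(G)=t\ne 0$, write $G=G_2\oplus G_{2'}$ with $|G_{2'}|=m'$ odd, and combine the Fourier expansion
$$|\mathsf N(G,k'',g)|=\frac{1}{|G|}\sum_{\chi\in\widehat{G}}\overline{\chi(g)}\,[x^{k''}]\prod_{h\in G}(1+x\chi(h))$$
with the standard cyclotomic identity $\prod_{h\in G}(1+x\chi(h))=(1-(-x)^d)^{|G|/d}$ for $\chi$ of order $d$. Since $\chi(t)=-1$ precisely when $\chi|_{G_2}$ is faithful, equivalently $2^a\mid d$, only such $\chi$ contribute to $|\mathsf N(G,k'',0)|-|\mathsf N(G,k'',t)|$. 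For these characters $d$ is even, the product simplifies to $(1-x^d)^{|G|/d}$, and the coefficient $[x^{k''}](1-x^d)^{|G|/d}$ vanishes unless $d\mid k''$, hence unless $2^a\mid k''$. A short $2$-adic calculation with $|G|=2^am'$ yields $2^a\mid k''\iff v_2(k)\ge a$, i.e.\ condition (3) fails; so condition (3) forces the difference to be zero, giving the ``if'' direction.

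For the converse, assume (1), (2) and (3) all fail. Parametrising each contributing character as $\chi=\chi_2\otimes\chi_1$, with $\chi_2$ one of the $\phi(2^a)=2^{a-1}$ faithful characters of $C_{2^a}$ and $\chi_1$ a character of $G_{2'}$ of order $d'$, I would use the simplification $(-1)^{k''/d}=(-1)^{k''/2^a}$ (valid because $d'$ is odd and $d'\mid k''/2^a$) to extract a global sign. The remaining inner sum is, up to the factor $(2\cdot 2^{a-1}\cdot m')/|G|=1$, the character formula for $\mathsf N(G_{2'},k''/2^a,0)$, so the whole sum collapses to
$$|\mathsf N(G,k'',0)|-|\mathsf N(G,k'',t)|=(-1)^{k''/2^a}\,|\mathsf N(G_{2'},k''/2^a,0)|.$$
Positivity of the right-hand side then finishes the argument: for odd $d'$ each cyclotomic factor $(1+x^{d'})^{m'/d'}$ has non-negative coefficients, and the $d'=1$ term alone contributes $\binom{m'}{k''/2^a}/m'\ge 1$ in the range $1\le k''/2^a\le m'-1$, a range guaranteed by $1\le k\le|G|-1$ and $m'\ge 2$ (the degenerate case $m'=1$ makes $G$ cyclic of $2$-power order, for which condition (3) is automatic).

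I expect the main obstacle to be the algebraic collapse in the final paragraph, together with the accompanying positivity check: once the double character sum is correctly massaged into an unambiguous multiple of $|\mathsf N(G_{2'},k''/2^a,0)|$, non-vanishing is almost immediate, but the parity bookkeeping (extracting $(-1)^{k''/2^a}$ uniformly across orders $d'$) and the $2$-adic matching with condition (3) require care. The translation action $S\mapsto S+g$ on $k''$-subsets does give a clean, purely combinatorial proof of the sufficient direction (one checks $t\in k''G\iff v_2(k)<v_2(n_r)$), but producing a strict inequality in the opposite regime seems to genuinely require the character identity above.
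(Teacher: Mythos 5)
Your argument is correct, and its sufficiency half coincides with the paper's: the reduction of conditions (1)--(2) to $\sum_{g\in G}g=0$ is exactly Lemma \ref{allsum0}, and your remark that $t\in kG\iff v_2(k)<v_2(n_r)$, giving the translated complementation $S\mapsto x+(G\setminus S)$ with $kx=t$, is exactly Lemma \ref{v2kv2n} (your character-vanishing argument for this direction is an equally valid substitute). Where you genuinely diverge is the necessity direction. The paper substitutes into the closed formula of Theorem \ref{LiW} for both $|\mathsf N(G,k)|$ and $|\mathsf N(G,|G|-k)|$, notes that the two sums agree term by term except possibly in sign, and then splits into the cases $v_2(k)=v_2(n_r)$ and $v_2(k)>v_2(n_r)$ to exhibit a strictly negative summand on exactly one side. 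You instead compute $|\mathsf N(G,|G|-k,0)|-|\mathsf N(G,|G|-k,t)|$ directly from the character expansion and collapse it to a closed form; I checked the two delicate points --- the normalization $2\cdot 2^{a-1}/|G|=1/m'$ matching the $1/m'$ in the formula for $G_{2'}$, and the uniform sign $(-1)^{k''/d}=(-1)^{k''/2^a}$ for $d=2^ad'$ with $d'$ odd --- and your identity
$$|\mathsf N(G,|G|-k)|-|\mathsf N(G,k)|=(-1)^{(|G|-k)/2^a}\,\bigl|\mathsf N\bigl(G_{2'},(|G|-k)/2^a\bigr)\bigr|,$$
valid whenever $2^{a}=2^{v_2(n_r)}$ divides $|G|-k$, is correct (the range check $1\le(|G|-k)/2^a\le m'-1$ and the degenerate case $m'=1$ are handled properly). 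This buys strictly more than the paper proves: it identifies which of the two quantities is larger and by exactly how much, recovering the paper's two cases from the parity of $(|G|-k)/2^a$, at the cost of the extra bookkeeping of counting the characters of order $2^ad'$ as $2^{a-1}\varphi_{G_{2'}}(d')$. The paper's version is marginally more elementary, since a single negative term against all-positive terms already forces strict inequality without evaluating the difference. Both are complete proofs.
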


Next, we provide a generalization of Theorem \ref{mainth1} which is motivated by a result of Panyushev \cite[(3.5)]{Pan}. We briefly recall some definitions and notation. Let $G$ be a finite group and $V$ a $G$-module. Let $\big(\mathcal{S}(V)\otimes\wedge(V)\big)_{G,\chi}$ denote the isotypic component in the symmetric tensor exterior algebra of $V$ corresponding to an irreducible representation $\chi$. It is a bi-graded vector space and its Poincar\'{e} series is the formal power series
$$\mathcal{F}\Big(\big(\mathcal{S}(V)\otimes\wedge(V)\big)_{G,\chi};s,t\Big)=\sum_{p,m\ge 0}\dim \big(\mathcal{S}^p(V)\otimes\wedge^m(V)\big)_{G,\chi}s^pt^m.$$
Actually, counting formulas for $\mathsf N(G,k,g)$ and $\mathsf M(G,m,g)$ can be derived from this Poincar\'{e} series (see Section 3). We denote $\big(\mathcal{S}(V)\otimes\wedge(V)\big)_{G}=\big(\mathcal{S}(V)\otimes\wedge(V)\big)_{G,\chi_0}$ if $\chi_0$ is the trivial representation. Let $C_{q+m}$ and $C_{p+m}$ be two cyclic groups of order $q+m$ and $p+m$. Let $\mathcal{R}$ (resp. $\tilde{\mathcal{R}}$) be the regular representation of $C_{q+m}$ (resp. $C_{p+m}$) over $\mathbb C$, the field of complex numbers. Panyushev \cite{Pan} proved the following interesting generalization of (\ref{Fredman}):
\begin{equation}\label{Panyushev}
\dim\big(\mathcal{S}^p(\mathcal{R})\otimes\wedge^m(\mathcal{R})\big)_{C_{q+m}}=
\dim\big(\mathcal{S}^q(\tilde{\mathcal{R}})\otimes\wedge^m(\tilde{\mathcal{R}})\big)_{C_{p+m}}.
\end{equation}
Following the idea of Panyushev, we have the following generalization.
\begin{theorem}\label{mainth2}
Let $p,q,m$ be non-negative integers and let $G_{q+m}$ and $H_{p+m}$ be two abelian groups of orders $q+m$ and $p+m$. Let $\mathcal{R}$ (resp. $\tilde{\mathcal{R}}$) be the regular representation of $G_{q+m}$ (resp. $H_{p+m}$) over $\mathbb C$. If $(p,q,m)=1$, then we have
\begin{equation}\label{recige}
\dim\big(\mathcal{S}^p(\mathcal{R})\otimes\wedge^m(\mathcal{R})\big)_{G_{q+m}}=
\dim\big(\mathcal{S}^q(\tilde{\mathcal{R}})\otimes\wedge^m(\tilde{\mathcal{R}})\big)_{H_{p+m}}.
\end{equation}
Indeed, both dimensions are equal to $\frac{1}{p+q+m}\binom{p+q+m}{p,q,m}$.
\end{theorem}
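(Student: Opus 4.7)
The plan is to compute both dimensions directly via the Molien-type character formula
$$\dim W^{G}=\frac{1}{|G|}\sum_{h\in G}\mathrm{tr}(h\mid W),\qquad W=\mathcal{S}^{p}(\mathcal{R})\otimes\wedge^{m}(\mathcal{R}),$$
and symmetrically on the other side, and then to show that the coprimality hypothesis $(p,q,m)=1$ collapses each sum to the contribution of the identity alone, yielding the common value $\frac{1}{p+q+m}\binom{p+q+m}{p,q,m}$.

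First I will read off $\mathrm{tr}(h\mid W)$ combinatorially. The regular representation $\mathcal{R}$ has basis $\{e_{g}:g\in G_{q+m}\}$ with $h\cdot e_{g}=e_{g+h}$, so $h$ permutes $G_{q+m}$ in $\langle h\rangle$-orbits of common size $d:=\mathrm{ord}(h)$, and there are $(q+m)/d$ such orbits. A basis element of $\mathcal{S}^{p}(\mathcal{R})$ (a size-$p$ multiset in $G_{q+m}$) or of $\wedge^{m}(\mathcal{R})$ (a size-$m$ subset in $G_{q+m}$) is fixed setwise by $h$ precisely when it is a disjoint union of whole $\langle h\rangle$-orbits, which forces $d\mid p$ and $d\mid m$. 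In that case the relevant counts are $\binom{(q+m)/d+p/d-1}{p/d}$ and $\binom{(q+m)/d}{m/d}$ respectively; moreover, the induced permutation on any such fixed $m$-subset is a product of $m/d$ disjoint $d$-cycles, contributing the sign $(-1)^{(d-1)m/d}$ to the trace on the exterior factor. Multiplicativity of traces then yields an explicit product formula for $\mathrm{tr}(h\mid W)$.

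Now I apply the hypothesis $(p,q,m)=1$. Any $h$ that contributes nontrivially satisfies $d\mid\gcd(p,m,q+m)$, and a standard simplification gives $\gcd(p,m,q+m)=\gcd(p,m,q)=(p,q,m)=1$, so only $h=0$ (with $d=1$) survives. Substituting $d=1$ and dividing by $|G_{q+m}|=q+m$ gives
$$\dim W^{G_{q+m}}=\frac{1}{q+m}\binom{p+q+m-1}{p}\binom{q+m}{m}=\frac{(p+q+m-1)!}{p!\,q!\,m!}=\frac{1}{p+q+m}\binom{p+q+m}{p,q,m}$$
after a routine factorial cancellation. Applying exactly the same argument to $H_{p+m}$, $\tilde{\mathcal{R}}$, and $\mathcal{S}^{q}\otimes\wedge^{m}$ produces the identical value, since the final formula is symmetric in $p$ and $q$; this proves~(\ref{recige}).

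The only step that genuinely needs care is tracking the sign on the exterior factor; the remainder is a direct Burnside-style count. A pleasing feature of this approach is that it explains transparently why the common dimension is insensitive to the isomorphism types of $G_{q+m}$ and $H_{p+m}$: under $(p,q,m)=1$ no non-identity element can stabilize a basis vector of $W$, so the character sum is blind to the distribution of element orders in the group, which is the only group-theoretic datum that the Molien formula registers.
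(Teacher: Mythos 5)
Your proof is correct, and it reaches the same character-averaging identity that the paper uses, but by a genuinely more self-contained route. The paper's proof simply quotes the Poincar\'e series formula (\ref{abel0}) from Lemma \ref{Panlemma} (which rests on Molien's theorem via Almkvist's formula (\ref{MoAF})) and extracts the coefficient of $s^pt^m$, obtaining $\frac{1}{p+q+m}\sum_{d\mid(p,q,m)}(-1)^{m+m/d}\varphi_{G_{q+m}}(d)\binom{(p+q+m)/d}{p/d,\,q/d,\,m/d}$, after which coprimality kills every term but $d=1$. You instead re-derive exactly the $d$-th term of that sum by a direct Burnside-style count of fixed basis vectors: your fixed-multiset count $\binom{(q+m)/d+p/d-1}{p/d}$, fixed-subset count $\binom{(q+m)/d}{m/d}$, and sign $(-1)^{(d-1)m/d}=(-1)^{m+m/d}$ are precisely the coefficients of $s^p$ and $t^m$ in $\bigl(\frac{1-(-t)^d}{1-s^d}\bigr)^{(q+m)/d}$, and your divisibility reduction $d\mid\gcd(p,m,q+m)=\gcd(p,q,m)=1$ is the paper's restriction of the sum to $d\mid(p,q,m)$. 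What your version buys is transparency: it makes explicit why only the identity contributes and hence why the answer is blind to the isomorphism type of the group, without invoking the invariant-theoretic machinery. What the paper's route buys is the full formula for arbitrary $(p,q,m)$ (with the $\varphi_G(d)$ terms), which it needs elsewhere, e.g.\ for Theorem \ref{LiW} and Proposition \ref{GCp}. One small wording caveat: for $\mathcal{S}^p(\mathcal{R})$ the fixed basis monomials are those whose multiplicity function is constant on $\langle h\rangle$-orbits (an orbit taken with multiplicity), rather than literal disjoint unions of orbits; your count is nonetheless the correct one.
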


Note that, when $m=0$, (\ref{recige}) simplifies to the reciprocity in Theorem \ref{mainth1}.(1). Panyushev asked for a combinatorial interpretation of (\ref{Panyushev}) (see the discussion after Theorem 3.2 in \cite{Pan}). We shall provide one for the above generalized theorem in a special case using a necklace interpretation.

The following sections are organized as follows. In Section 2, we shall introduce some preliminary results and notation. In Section 3, we discuss the counting formulas for $\mathsf N(G,k,g)$ and $\mathsf M(G,m,g)$ as well as some related results in invariant theory. In Section 4, we prove Theorems \ref{mainth1}, \ref{subset-reci}, and \ref{mainth2}. Finally, in Section 5, we provide some concluding remarks and propose a problem for future study.

\section{Preliminaries}

In this section, we will provide more rigorous definitions and notation. We also introduce some preliminary results that will be used repeatedly below.

Let $\mathbb C$ be the field of complex numbers. Denote by $\mathbb N$ the set of positive integers and let $\mathbb N_0=\mathbb N\cup\{0\}$. Let $G$ be a finite abelian group written additively. By the fundamental theorem of finite abelian groups we have
$$G\cong C_{n_1}\oplus\cdots\oplus C_{n_r}=\langle e_1\rangle\oplus\cdots\oplus \langle e_r\rangle$$
where $r=\mathsf r(G)\in \mathbb{N}_0$ is the rank of $G$, $1<n_1|\cdots|n_r\in\mathbb{N}$ are positive integers. Moreover, $n_1,\ldots,n_r$ are uniquely determined by $G$. We also use $C_n^r$ to denote the abelian group of the following form
$$\overbrace{C_n\oplus\cdots\oplus C_n}^r.$$
In combinatorial number theory, a $sequence$ over $G$ is defined to be an element of the multiplicatively written free abelian monoid $\big(\mathcal F(G),\bdot\big)$; see Chapter 5 of \cite{GH} for detailed explanation and see \cite{GeG} for a discussion of the notation. In particular, we define:
$$ g^{[k]}=\overset{k}{\overbrace{g\bdot\ldots\bdot g}}\in \mathcal F (G),$$
for $g \in G$ and $k \in \mathbb N_0$.

For a sequence $S=g_1\bdot\ldots\bdot g_l\in \mathcal F (G)$ we call
\begin{itemize}

\item $|S|=l\in \mathbb{N}_0$ the $length$ of $S$;

\item $\sigma(S)=g_1+\cdots+g_l\in G$ the $sum$ of $S$;

\item $S$ a $zero$-$sum$ $sequence$ if $\sigma(S)=0$.
\end{itemize}

For the convenience of our bijective proofs later, we provide the following modified notation of sequences.
\begin{remark}\label{important}
We write a sequence $S$ over $C_n=\langle e\rangle=\{0,e,2e,\ldots,(n-1)e\}$ as a vector $(x_0,\ldots,x_{n-1})\in\mathbb N^n$, where $x_i$ is the multiplicity that $ie$ occurs in $S$, that is, in the previous notation
$$S=0^{[x_0]}\bdot e^{[x_1]}\bdot\ldots\bdot \big((n-1)e\big)^{[x_{n-1}]}.$$
Generally, let $G$ be a finite abelian group of order $n$ and
$$G\cong C_{n_1}\oplus\cdots\oplus C_{n_r}=\langle e_1\rangle\oplus\cdots\oplus \langle e_r\rangle$$
where $n_1|\cdots|n_r\in\mathbb{N}$ are positive integers and $n_1\cdots n_r=n$. Similar to the above case, every element in $G$ can be written uniquely as $$a_1e_1+\cdots+a_re_r$$ for some positive $a_i\le n_i-1$ and $1\le i\le r$. In order to attach a similar vector $(y_0,\ldots,y_{n-1})\in\mathbb N^n$ to an arbitrary sequence over $G$, we introduce the following labels for elements in $G$. Any integer $k\in [0,n_1\cdots n_r-1]$ can be written uniquely in the following form
$$k=b_{r}n_1\cdots n_{r-1}+b_{r-1}n_1\cdots n_{r-2}+\cdots+b_2n_1+b_1$$
where $b_i\in[0,n_i-1]$ for $1\le i\le r$. Therefore, let $g=a_1e_1+\cdots+a_re_r$, then $g$ will be attached with the label
$$l_g=a_{r}n_1\cdots n_{r-1}+\cdots+a_2n_1+a_1.$$ With this label, a vector $(y_0,\ldots,y_{n-1})\in\mathbb N^n$ corresponds to a sequence $S$ over $G$, where $y_i$ is the multiplicity that $g=a_1e_1+\cdots+a_re_r$ (with $l_g=i$) occurs in $S$. In the previous notation
\begin{equation}\label{labelG}
S=\prod_{g\in G}g^{[y_{l_g}]}.
\end{equation}
\end{remark}

\medskip

For any positive integers $n,m$ with $(n,m)=1$, the {\sl $(n,m)$-Catalan number} is defined as
$$\mathsf {Cat}_{n,m}=\frac{1}{n+m}\binom{n+m}{n}$$
which is a natural generalization (take $m=n+1$) of Catalan numbers $\mathsf {Cat}_{n}:=\frac{1}{2n+1}\binom{2n+1}{n}$ (see \cite{Stan1}). For all relatively prime pair $(n,m)$, these $(n,m)$-Catalan numbers are also called the rational Catalan numbers. A typical object counted by $\mathsf {Cat}_{n,m}$ is the set $\mathcal{D}_{n,m}$ of all $(n,m)$-Dyck paths which is defined as the number of lattice paths from $(0,0)$ to $(n,m)$ which only use unit steps $(1,0)$
or $(0,1)$ and stay above the diagonal line $y=\frac{m}{n}x$ (see \cite{Biz}). In fact, the rational Catalan numbers (and their $q$- or $(q,t)$- analogs) arose naturally in many research areas, such as simultaneously core partitions, non-crossing partitions, parking functions, Hecke algebra, affine Springer varieties, compactified Jacobians of singular curves, etc.; see, e.g., \cite{LS,GarH,Arm,ARW,GM,AHJ,GM2,ALW,BR,GMV}. In fact, studies related to the rational Catalan numbers are called rational Catalan combinatorics, which is currently an active branch of combinatorics. Moreover, the rational Catalan numbers have an interesting and deep algebraic generalization (see \cite{Hai1}).

\section{Invariants of finite abelian groups}

In this section, we first recall the following counting formulas, which are explicitly obtained in \cite{LW,Ko,MRW} via sieve method (known as Li-Wan's sieve method) and generating functions.

\begin{theorem}\label{LiW}{\rm (\cite{LW,Ko,MRW})}
Let $G=C_{n_1}\oplus\cdots\oplus C_{n_r}=\langle e_1\rangle\oplus\cdots\oplus \langle e_r\rangle$ be a finite abelian group with $|G|=n=n_1\cdots n_r$ and $n_1|\cdots|n_r$. Let $g=g_1e_1+\cdots+g_re_r\in G$, where $g_i\in [0,n_i-1]$ for every $i$. Then for any positive integers $k<n$ and $m$, we have
\begin{enumerate}

\item $|\mathsf N(G,k,g)|=\frac{1}{n}\sum_{d|(n,k)}\Phi_G(g,d)(-1)^{k+\frac{k}{d}}
    \binom{n/d}{k/d}$,

\item $|\mathsf M(G,m,g)|=\frac{1}{n+m}\sum_{d|(n,m)}\Phi_G(g,d)\binom{n/d+m/d}{n/d}$,

\end{enumerate}
where $\Phi_G(g,d)=\sum_{\chi\in\widehat{G},\text{ord}(\chi)=d}\chi(g)=\sum_{l|d,(n_i,l)|g_i}\mu(\frac{d}{l})\prod_{i=1}^r(n_i,l)$.
In particular, for any positive integers $k< n$ and $m$, we have
$$|\mathsf N(G,k)|=\frac{1}{n}\sum_{d|(n,k)}\varphi_G(d)(-1)^{k+\frac{k}{d}}\binom{n/d}{k/d}$$
and
$$|\mathsf M(G,m)|=\frac{1}{n+m}\sum_{d|(n,m)}\varphi_G(d)\binom{n/d+m/d}{n/d},$$
where $\varphi_G(d)=\sum_{l|d}\mu(d/l)\prod_{i=1}^r(n_i,l)$.
\end{theorem}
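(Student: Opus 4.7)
The plan is to establish both formulas in a unified way by combining the standard character-orthogonality detector for the zero-sum condition with a generating-function identity that exploits how a character $\chi$ of order $d$ distributes values of $h\in G$ uniformly among the $d$-th roots of unity.

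I would begin from the character identity
$$n\cdot[\sigma(S)=g]=\sum_{\chi\in\widehat{G}}\chi(\sigma(S))\overline{\chi(g)}$$
and sum over the objects of interest. The generating function $\sum_{S}\chi(\sigma(S))t^{|S|}$ over all sequences $S\in\mathcal F(G)$ factors as $\prod_{h\in G}(1-\chi(h)t)^{-1}$, while the analogous sum over all subsets of $G$ factors as $\prod_{h\in G}(1+\chi(h)t)$, because a sequence (resp.\ subset) is built independently at each $h\in G$ with arbitrary (resp.\ $\{0,1\}$-valued) multiplicity. Both counts therefore reduce to coefficient extractions indexed by characters.

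The next step is to evaluate the two products. If $\chi$ has order $d$, then as $h$ ranges over $G$ the value $\chi(h)$ attains each $d$-th root of unity exactly $n/d$ times. Using $\prod_{\zeta^d=1}(1-\zeta t)=1-t^d$ and $\prod_{\zeta^d=1}(1+\zeta t)=1-(-1)^d t^d$, I obtain
$$\prod_{h\in G}(1-\chi(h)t)^{-1}=(1-t^d)^{-n/d},\qquad\prod_{h\in G}(1+\chi(h)t)=(1-(-1)^d t^d)^{n/d}.$$
Extracting the coefficient of $t^m$ in the first yields $\binom{n/d+m/d-1}{m/d}$ whenever $d\mid m$, while extracting the coefficient of $t^k$ in the second yields $(-1)^{k+k/d}\binom{n/d}{k/d}$ whenever $d\mid k$; both vanish otherwise. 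Grouping characters by their order and noting that $\chi\mapsto\overline{\chi}$ is a bijection on the set of characters of any fixed order, I would replace $\sum_{\mathrm{ord}(\chi)=d}\overline{\chi(g)}$ by $\Phi_G(g,d)$ in each order class. The elementary identity $\binom{n/d+m/d-1}{m/d}/n=\binom{n/d+m/d}{n/d}/(n+m)$ then converts the sequence formula into the stated shape; the subset formula is already in the desired form.

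It remains to identify $\Phi_G(g,d)$ in closed form. Setting $\Psi(g,l):=\sum_{\chi^{l}=1}\chi(g)$, M\"obius inversion on the divisor lattice gives $\Phi_G(g,d)=\sum_{l\mid d}\mu(d/l)\Psi(g,l)$. Via the isomorphism $\widehat{G}\cong\bigoplus_i\widehat{C_{n_i}}$ the sum $\Psi(g,l)$ factors as $\prod_i\sum_{\chi_i^{l}=1}\chi_i(g_i)$, and each inner sum is a standard subgroup sum: the characters of $C_{n_i}$ of order dividing $l$ form a subgroup of size $(n_i,l)$ whose annihilator in $C_{n_i}$ consists precisely of those $g_i\in[0,n_i-1]$ with $(n_i,l)\mid g_i$. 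Hence the inner sum equals $(n_i,l)$ if $(n_i,l)\mid g_i$ and $0$ otherwise, producing the claimed expression for $\Phi_G(g,d)$. The specialization $g=0$ makes every divisibility condition vacuous and recovers the formula for $\varphi_G(d)$. In my view the only delicate piece of bookkeeping is tracking the sign $(-1)^{k+k/d}$ in the subset case; the rest is routine generating-function manipulation.
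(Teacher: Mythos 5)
Your argument is correct, and every computation checks out: the surjectivity of a character of order $d$ onto the $d$-th roots of unity with fibres of size $n/d$, the evaluations $\prod_{\zeta^d=1}(1-\zeta t)=1-t^d$ and $\prod_{\zeta^d=1}(1+\zeta t)=1-(-1)^dt^d$, the sign $(-1)^{k+k/d}$, the rescaling $\tfrac{1}{n}\binom{n/d+m/d-1}{m/d}=\tfrac{1}{n+m}\binom{n/d+m/d}{n/d}$, and the M\"obius-inversion/annihilator computation of $\Phi_G(g,d)$ are all right. The route differs from the paper's in packaging rather than in substance. The paper does not reprove the theorem from scratch: it cites \cite{LW,Ko,MRW} and then derives the formulas by identifying $|\mathsf M(G,m,g)|$ and $|\mathsf N(G,k,g)|$ with $\dim\mathcal S^m(V)_{G,\widehat g}$ and $\dim\wedge^k(V)_{G,\widehat g}$ for the regular representation and extracting coefficients from the Molien--Almkvist--Panyushev Poincar\'e series (\ref{abel}), whose summand $\bigl(\tfrac{1-(-t)^d}{1-s^d}\bigr)^{n/d}$ weighted by $\sum_{\mathrm{ord}=d}\chi(g^{-1})$ is exactly your pair of products grouped by character order. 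Your version replaces the invariant-theoretic projector onto an isotypic component by the equivalent orthogonality detector $n\cdot[\sigma(S)=g]=\sum_\chi\chi(\sigma(S))\overline{\chi(g)}$ and works directly with $\prod_{h\in G}(1-\chi(h)t)^{-1}$ and $\prod_{h\in G}(1+\chi(h)t)$; this is essentially Kosters' generating-function proof (which the paper describes in a remark), extended uniformly to multisets. What you gain is a self-contained elementary proof that does not lean on Lemma \ref{Panlemma}; what the paper's framing buys is the bigraded statement (\ref{abel0}) needed later for Theorem \ref{mainth2}, which your two separate one-variable generating functions would have to be recombined to reproduce.
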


It is easy to see that the equalities in Theorem \ref{mainth1}.(1) and (3) follow from the above theorem. As we have mentioned before, employing ideas of Molien \cite{Mol} and results of Almkvist and Fossum \cite{Alm,AlmF} from invariant theory, Panyushev \cite{Pan} implicitly obtained a proof of Theorem \ref{LiW} from the perspective of invariant theory. As Theorem \ref{mainth2} is motivated by Panyushev's result, we briefly recall his approach for the convenience of readers.

Let $G$ be a finite group and $V$ a finite dimensional representation of $G$ over $\mathbb C$. Let $\big(\mathcal{S}(V)\otimes\wedge(V)\big)_{G,\chi}$ denote the isotypic component in symmetric tensor exterior algebra of $V$ corresponding to an irreducible representation $\chi$. It is a bi-graded vector space and its Poincar\'{e} series is the formal power series
\begin{equation}\label{Almkvist}
\mathcal{F}\Big(\big(\mathcal{S}(V)\otimes\wedge(V)\big)_{G,\chi};s,t\Big)=\sum_{p,m\ge 0}\dim \big(\mathcal{S}^p(V)\otimes\wedge^m(V)\big)_{G,\chi}s^pt^m.
\end{equation}
Based on a remarkable theorem of Molien (\cite{Mol}, \cite[Section 2]{Stan}), Almkvist (\cite[Theorem 1.33]{Alm}) proved the following formula
\begin{equation}\label{MoAF}
\mathcal{F}\Big(\big(\mathcal{S}(V)\otimes\wedge(V)\big)_{G,\chi};s,t\Big)=\frac{1}{|G|}\sum_{g\in G}\frac{\det_V(E+gt)}{\det_V(E-gs)},
\end{equation}
where $E$ is the identity matrix in $GL(V)$. Later, Panyushev obtained the following easy consequence of (\ref{MoAF}).
\begin{lemma}{\rm (\cite[Lemma 3.1]{Pan})}\label{Panlemma}
Let $G$ be a finite group and $\mathcal{R}$ the regular representation of $G$ over $\mathbb C$, then we have
$$\mathcal{F}\Big(\big(\mathcal{S}(\mathcal{R})\otimes\wedge(\mathcal{R})\big)_{G,\chi};s,t\Big)
=\frac{\deg(\chi)}{|G|}\sum_{d\ge1}\Big(\sum_{g,\text{ord}(g)=d}\text{tr}(\chi(g^{-1}))\cdot
\Big(\frac{1-(-t)^d}{1-s^d}\Big)^{|G|/d}\Big).$$
In particular, if $G$ is abelian, then we have
\begin{equation}\label{abel}
\mathcal{F}\Big(\big(\mathcal{S}(\mathcal{R})\otimes\wedge(\mathcal{R})\big)_{G,\chi};s,t\Big)
=\frac{1}{|G|}\sum_{d\ge1}\Big(\sum_{g,\text{ord}(g)=d}\chi(g^{-1})\cdot
\Big(\frac{1-(-t)^d}{1-s^d}\Big)^{|G|/d}\Big)
\end{equation}
and
\begin{equation}\label{abel0}
\mathcal{F}\Big(\big(\mathcal{S}(\mathcal{R})\otimes\wedge(\mathcal{R})\big)_{G};s,t\Big)
=\frac{1}{|G|}\sum_{d\ge1}\Big(\varphi_G(d)\Big(\frac{1-(-t)^d}{1-s^d}\Big)^{|G|/d}\Big),
\end{equation}
where $\varphi_G(d)$ is the number of elements in $G$ of order $d$.
\end{lemma}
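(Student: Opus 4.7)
The plan is to derive Lemma \ref{Panlemma} by specializing Almkvist's formula (\ref{MoAF}) to the regular representation $V=\mathcal{R}$. First I would invoke the character-weighted form of (\ref{MoAF}): extracting the $\chi$-isotypic component inserts the factor $\deg(\chi)\,\text{tr}(\chi(g^{-1}))$ in the $g$-th summand on the right (formula (\ref{MoAF}) as written being the trivial-character instance). Everything then reduces to computing $\det_{\mathcal{R}}(E-gs)$ and $\det_{\mathcal{R}}(E+gt)$ for a general $g\in G$.

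The key observation will be that for $g\in G$ of order $d$, the eigenvalues of $g$ acting on $\mathcal{R}$ are exactly the $d$-th roots of unity, each occurring with multiplicity $|G|/d$. To justify this I would decompose $G$ into $|G|/d$ right cosets of the cyclic subgroup $\langle g\rangle$: as a $\langle g\rangle$-module, $\mathcal{R}$ splits into $|G|/d$ copies of the regular representation of $\langle g\rangle$, and the latter carries each $d$-th root of unity as an eigenvalue exactly once. Combining this with the elementary identities $\prod_{\zeta^d=1}(1-\zeta x)=1-x^d$ and $\prod_{\zeta^d=1}(1+\zeta x)=1-(-x)^d$ (the second follows from the first by $x\mapsto -x$), I obtain
$$\det_{\mathcal{R}}(E-gs)=(1-s^d)^{|G|/d},\qquad \det_{\mathcal{R}}(E+gt)=(1-(-t)^d)^{|G|/d}.$$
Substituting these into the character-weighted version of (\ref{MoAF}) and regrouping the sum over $g\in G$ by the order $d$ of $g$ yields the displayed general formula.

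The two abelian specializations will then follow in one line. When $G$ is abelian every irreducible $\chi$ is one-dimensional, so $\deg(\chi)=1$ and $\text{tr}(\chi(g^{-1}))=\chi(g^{-1})$, producing (\ref{abel}). Taking $\chi=\chi_0$ further collapses the inner sum over elements of order $d$ to the count $\varphi_G(d)$, yielding (\ref{abel0}). The only step requiring more than mechanical unwinding is the eigenvalue computation for $g$ acting on $\mathcal{R}$; this is standard and is handled cleanly by the coset decomposition above, so I do not anticipate a genuine obstacle.
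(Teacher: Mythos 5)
Your proposal is correct and follows exactly the route the paper indicates (the lemma is quoted from Panyushev and described as ``an easy consequence of (\ref{MoAF})''): specialize the Molien--Almkvist formula to the regular representation, observe that an element $g$ of order $d$ acts on $\mathcal{R}$ with each $d$-th root of unity as an eigenvalue of multiplicity $|G|/d$ (via the coset decomposition of $\mathcal{R}$ into copies of the regular representation of $\langle g\rangle$), and regroup the sum over $g$ by order. You also rightly note that (\ref{MoAF}) as printed omits the weight $\deg(\chi)\,\mathrm{tr}(\chi(g^{-1}))$ needed for a nontrivial $\chi$, and your insertion of that factor is the correct reading.
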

The special cases when $t=0$ in (\ref{abel0}) was obtained in 1978 by Almkvist and Fossum \cite[V.1.8]{AlmF}, when $s=0$ we refer to \cite[Section 4]{Pan}.

Now we provide a detailed discussion of the above result in the case when $G$ is abelian and show that (\ref{abel}) actually provides the counting formulas for $\mathsf N(G,k,g)$ and $\mathsf M(G,m,g)$ simultaneously. Let $G$ be a finite abelian group of order $n$. Let $V=\langle e_1\rangle\oplus\cdots\oplus\langle e_n\rangle$ be the regular representation of $G$ over $\mathbb C$ with $g\cdot e_i:=\chi_i(g)e_i$, where $g\in G$ and $\chi_i\in\widehat{G}$. For any $b\in G$, let $\widehat{b}$ be the element in $\widehat{G}$ which corresponds to $b$ under the isomorphism $G\cong\widehat{G}$. Let $\mathcal{S}(V)$ be the symmetric algebra of $V$. Note that $\mathcal{S}(V)=\bigoplus_{m\ge 0}\mathcal{S}^m(V)$ is a graded algebra, where $\mathcal{S}^m(V)$ is the homogeneous component of $\mathcal{S}(V)$ of degree $m$. The action of $G$ on $V$ can be naturally induced on $\mathcal S(V)$. For example, for any $e_{i_1}\otimes\cdots\otimes e_{i_k}\in \mathcal{S}(V)$, we have:
\begin{align*}
&g\cdot (e_{i_1}\otimes\cdots
\otimes e_{i_k}):=g\cdot e_{i_1}\otimes\cdots
\otimes g\cdot e_{i_k}\\
&=\chi_{i_1}(g)\cdot e_{i_1}\otimes\cdots
\otimes \chi_{i_k}(g)\cdot e_{i_k}=\chi_{i_1}\cdots\chi_{i_k}(g) (e_{i_1}\otimes\cdots
\otimes e_{i_k}).
\end{align*}
Let $\mathcal S^m(V)_{G,\widehat{b}}$ be the isotypic component in $\mathcal S^m(V)$ corresponding to $\widehat{b}\in\widehat{G}$, which is, by definition, spanned by all $e_{i_1}\otimes\cdots\otimes e_{i_m}\in \mathcal{S}^m(V)$ such that
\begin{align*}
&g\cdot (e_{i_1}\otimes\cdots
\otimes e_{i_m})=\chi_{i_1}\cdots\chi_{i_m}(g) (e_{i_1}\otimes\cdots
\otimes e_{i_m})\\
&=\widehat{b}(g) (e_{i_1}\otimes\cdots
\otimes e_{i_m})
\end{align*}
holds for any $g\in G$. Therefore, the above element $e_{i_1}\otimes\cdots\otimes e_{i_m}$ corresponds to, via the isomorphism $G\cong \widehat{G}$, a sequence $S=g_{i_1}\bdot\ldots\bdot g_{i_m}$ over $G$ with $\sigma(S)=b$, where $\widehat {g_{i_j}}=\chi_{i_j}$ for $1\le j\le m$. Consequently, we have
$$\dim\left(\mathcal S^m(V)_{G,\widehat{b}}\right)=|\mathsf M(G,m,b)|.$$
Similarly, by the definition of $\wedge^k(V)_{G,\widehat{b}}$, we have
$$\dim\left(\wedge^k(V)_{G,\widehat{b}}\right)=|\mathsf N(G,k,b)|.$$
Therefore, the Poincar\'e series $\mathcal{F}\Big(\big(\mathcal{S}(V)\otimes\wedge(V)\big)_{G,\widehat{g}};s,t\Big)$ provides the counting formulas for $\mathsf N(G,k,g)$ and $\mathsf M(G,m,g)$ simultaneously.

To obtain the precise formulas for $\mathsf N(G,k,g)$ and $\mathsf M(G,m,g)$, with Lemma \ref{Panlemma}, the last minor step is just an explicit calculation of $\sum_{g,\text{ord}(g)=d}\chi(g^{-1})$, or equivalently $\sum_{\chi,\text{ord}(\chi)=d}\chi^{-1}(g).$
Recall that
$$G=C_{n_1}\oplus\cdots\oplus C_{n_r}=\langle e_1\rangle\oplus\cdots\oplus \langle e_r\rangle$$
and $|G|=n$. Let
$$g=g_1e_1+\cdots+g_re_r\in G,$$
where $g_i\in [0,n_i-1]$ for every $i$. By basic representation theory and the principle of inclusion-exclusion (for the details we refer to \cite{Ko}), one obtains the following
\begin{equation}\label{chisumg}
\sum_{\chi,\text{ord}(\chi)=d}\chi^{-1}(g)=\sum_{l|d,(n_i,l)|g_i}\mu(\frac{d}{l})\prod_{i=1}^r(n_i,l).
\end{equation}
In particular,
\begin{equation}\label{varGd}
\varphi_G(d)=\sum_{l|d}\mu(\frac{d}{l})\prod_{i=1}^r(n_i,l).
\end{equation}
With (\ref{abel})-(\ref{varGd}), extracting the coefficient of $t^ks^m$, Theorem \ref{LiW} follows.

\begin{remark}
Note that, in order to prove Theorem \ref{LiW}.(1), Li and Wan \cite{LW} introduced a new sieve method which is different from the above method. Li-Wan's sieve method is useful to study counting problems in this flavor, we refer to their subsequent papers for detailed discussion \cite{LW08,LW2}. Kosters \cite{Ko} started from the following expansion ($G$ is considered as a multiplicative group here)
$$\sum_{i=0}^n\sum_{g\in G}N(G,i,g)gX^i=\prod_{h\in G}(1+hX)\in \mathbb C[G][X].$$ Then he used character theory to extract $N(G,i,g)$. From the perspective of generating functions, the proof of Kosters and the above proof of Panyushev (motivated by ideas of Molien, Almkvist and Fossum) which employed the Poincar\'{e} series are similar, but the details in these proofs can be different. There are several different and interesting ways to prove (\ref{MoAF}) and (\ref{abel}). For example, Molien's original idea (see \cite{Mol} and \cite[Section 2]{Stan}); Almkvist's method (see \cite[Theorem 1.33]{Alm}) and (\cite[Example 4.6]{Alm73}); Panyushev's approach (\cite[Lemma 3.1]{Pan}). For more detailed explanations of Molien's seminal result, we refer to \cite[Chapter 3]{NeuSm}.
\end{remark}

\begin{remark}
As we have mentioned that (\ref{MoAF}) is a more general formula, we briefly introduce its applications in more general settings (for non-abelian groups). Actually, the Poincar\'{e} series is a widely used and well-studied tool in invariant theory. For example, it was used to provide tight lower bounds for the Noether numbers of the quaternion group $Q_8$ and alternating group $A_4$ (see \cite{BS}). Here the Noether number of a finite group $G$, an interesting research topic in the interplay between invariant theory and zero-sum theory, is defined to be the maximal degree bound for the generators of the algebra of polynomial invariants of $G$. We refer to \cite[Chapter 5]{CDG} for a survey of studies of the Noether number, also to \cite{CDS,HanZh} for some recent results on the connection between zero-sum theory and the Noether number.
\end{remark}

\section{Proofs of our main results}

In this section, we first prove Theorem \ref{mainth1}, which can be regarded as a combinatorial interpretation of Theorem \ref{LiW} in some special cases.

Before we present the bijective proof, we first explain its basic idea by providing a direct and quick proof of the equality in Theorem \ref{mainth1}.(1). Let $G$ and $H$ be any two finite abelian groups with $|G|=n$, $|H|=m$, and $(n,m)=1$. Since $(n,m)=1$, it is easy to see that $mG=G$. Let $S=g_1\bdot\ldots\bdot g_m$ be any sequence over $G$ of length $m$. For any $g\in G$, consider the shifted sequence $g+S:=g+g_1\bdot\ldots\bdot g+g_m$, we have $\sigma(g+S)=mg+\sigma(S)$. Moreover, we have $\{\sigma(g+S)\ |\ g\in G\}=G$, or equivalently, there are exactly $\frac{1}{n}$ of all sequences of length $m$ over $G$ are zero-sum sequences. As there are $\binom{n+m-1}{m}$ sequences over $G$ of length $m$, we have $|\mathsf M(G,|H|)|=\frac{1}{n}\binom{n+m-1}{m}$ and the desired result follows. In the above proof, the idea of the shift is important and will also be the main strategy in the following bijective proof.\footnote{We thank the referee for suggesting to add this paragraph.}

With the help of Remark \ref{important}, we provide a necessary and sufficient condition for a sequence to be zero-sum. Recall that in the case when $G=C_n$ is the cyclic group of order $n$, a sequence $T=(y_0,\ldots,y_{n-1})$ over $G$ satisfies $\sigma(T)=0$ if and only if $\sum_{i=0}^{n-1}iy_i\equiv 0\pmod{n}.$ The following lemma is just a generalization of this fact and it is crucial in our following proof.
\begin{lemma}\label{zero-sum}
Let $G=C_{n_1}\oplus\cdots\oplus C_{n_r}=\langle e_1\rangle\oplus\cdots\oplus \langle e_r\rangle$ be a finite abelian group with $|G|=n=n_1\cdots n_r$ and $n_1|\cdots|n_r$. A sequence $T=(y_0,\ldots,y_{n-1})$ over $G$ satisfies $\sigma(T)=0$ if and only if the following congruences
\begin{equation}\label{gsimu1}
\sum_{k=0}^{n_1-1} k\Big(\sum_{0\le i_j\le n_j-1, j\in[2,r]}y_{i_rn_1\cdots n_{r-1}+\cdots+i_tn_1\cdots n_{t-1}+\cdots+i_2n_1+k}\Big)\equiv 0\pmod{n_1}
\end{equation}
and
\begin{equation}\label{gsimu}
\sum_{k=0}^{n_t-1} k\Big(\sum_{0\le i_j\le n_j-1, j\in[1,r]\setminus\{t\}}y_{i_rn_1\cdots n_{r-1}+\cdots+kn_1\cdots n_{t-1}+\cdots+i_2n_1+i_1}\Big)\equiv 0\pmod{n_t}
\end{equation}
hold simultaneously, where $t\in[2,r]$.
\end{lemma}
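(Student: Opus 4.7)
My plan is to unpack the condition $\sigma(T)=0$ in coordinates with respect to the decomposition $G=\langle e_1\rangle\oplus\cdots\oplus\langle e_r\rangle$. By the labeling convention (\ref{labelG}) in Remark~\ref{important}, the vector $(y_0,\ldots,y_{n-1})$ encodes the sequence $T=\prod_{g\in G}g^{[y_{l_g}]}$, hence
$$\sigma(T)=\sum_{g\in G}y_{l_g}\,g=\sum_{a_1,\ldots,a_r}y_{l_g}\,(a_1 e_1+\cdots+a_r e_r),$$
where the outer sum ranges over all tuples $(a_1,\ldots,a_r)$ with $a_j\in[0,n_j-1]$ and where $l_g=a_rn_1\cdots n_{r-1}+\cdots+a_2n_1+a_1$.

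Because $e_1,\ldots,e_r$ generate independent cyclic summands of respective orders $n_1,\ldots,n_r$, the equality $\sigma(T)=0$ holds in $G$ if and only if the system of $r$ congruences
$$\sum_{a_1,\ldots,a_r} a_t\,y_{l_g}\equiv 0\pmod{n_t},\qquad t=1,\ldots,r$$
holds simultaneously. For each fixed $t$ I would then single out the variable $a_t$ by setting $k:=a_t$ and renaming the remaining coordinates $i_j:=a_j$ for $j\ne t$. Under this relabeling, the contribution of $a_t$ to $l_g$ becomes $k\cdot n_1\cdots n_{t-1}$ (where $n_1\cdots n_0$ is interpreted as the empty product $1$ when $t=1$), so the congruence above becomes exactly (\ref{gsimu1}) when $t=1$ and (\ref{gsimu}) when $t\in[2,r]$.

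The entire argument is a direct translation from the additive definition of $\sigma(T)$ to the coordinate-wise description of the direct sum; the only delicate point is matching the label $l_g$ with its expanded form, which is dictated unambiguously by the labeling in Remark~\ref{important}. There is no conceptual obstacle beyond careful index bookkeeping.
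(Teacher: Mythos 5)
Your proposal is correct and follows essentially the same route as the paper's own proof: expand $\sigma(T)=\sum_{g\in G}y_{l_g}\,g$ in the basis $e_1,\ldots,e_r$, use that the sum is direct so that vanishing is equivalent to the $r$ congruences $c_t\equiv 0\pmod{n_t}$, and read off each $c_t$ by grouping the labels according to the value of $a_t$. The index bookkeeping you describe (isolating $k=a_t$, whose contribution to $l_g$ is $k\,n_1\cdots n_{t-1}$) is exactly what the paper does.
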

\begin{proof}
We first prove the case when $r=2$. For any $g=a_1e_1+a_2e_2\in G$, by Remark \ref{important}, $g$ is attached with the label
$$l_g=a_2n_1+a_1.$$
Assume that $\sigma(T)=c_1e_1+c_2e_2$. Therefore, $\sigma(T)=0$ if and only if $c_i\equiv 0\pmod{n_i}$ for $i=1,2$. We first calculate $c_1\pmod{n_1}$. Based on the above labels, for any $j\in[0,n_1-1]$ and $k\in[0,n_2-1]$, $\sigma\left((je_1+ke_2)^{[y_{kn_1+j}]}\right)=jy_{kn_1+j}e_1+ky_{kn_1+j}e_2$. Therefore, we have
$$c_1\equiv \sum_{j=0}^{n_1-1}j(y_j+y_{n_1+j}+\cdots+y_{(n_2-1)n_1+j})\pmod{n_1}$$
and
$$c_2\equiv \sum_{k=0}^{n_2-1}k(y_{kn_1}+y_{kn_1+1}+\cdots+y_{kn_1+n_1-1})\pmod{n_2}.$$
For the general case, let $g=a_1e_1+\cdots+a_re_r\in G$, by Remark \ref{important}, $g$ is attached with the label
$$l_g=a_{r}n_1\cdots n_{r-1}+\cdots+a_2n_1+a_1.$$
Assume that $\sigma(T)=c_1e_1+\cdots+c_re_r$. Therefore, $\sigma(T)=0$ if and only if $c_i\equiv 0\pmod{n_i}$ for $i\in[1,r]$. Moreover, for any $a_i\in[0,n_i-1]$ and $i\in[1,r]$, we have
$$\sigma\left((a_1e_1+\cdots+a_re_r)^{[y_{a_{r}n_1\cdots n_{r-1}+\cdots+a_2n_1+a_1}]}\right)
=\sum_{i=1}^ra_iy_{a_{r}n_1\cdots n_{r-1}+\cdots+a_2n_1+a_1}e_i.$$
Similar to the above, extracting $c_i$ ($1\le i\le r$), we get the desired result.
\end{proof}

Now, we are ready to prove Theorem \ref{mainth1}.

\medskip

{\sl Proof of the Theorem \ref{mainth1}.} (1) Let $|G|=n$ and $|H|=m$. Let $S=(x_0,\ldots,x_{n-1})$ be a zero-sum sequence over $G$ of length $m$, that is, $\sum_{i=0}^{n-1}x_i=m$. We shall construct a unique $(n,m)$-Dyck path $P$ which corresponds to the vector $(x_0,\ldots,x_{n-1})$. For any $i\in[0,n-1]$ and positive integer $t$ with $t\equiv i\pmod{n}$, we define $x_t=x_i$. The method used here is essentially the same as the proof of Theorem 12.1 in \cite{Loe}. The difference is that, in \cite{Loe}, each path was associated with a vector of length $n+m+1$ instead. We provide the complete proof here for the convenience of readers.

Firstly we construct a path (not necessarily a Dyck path) $Q$ from $(0,0)$ to $(n,m)$ as follows. Let $(i,\sum_{j=0}^{i-1}x_j)$ be the lowest lattice point in the $i$-th column of the path $Q$ where $i\in[1,n-1]$. We associate $Q$ with a vector $(y_1,\ldots,y_{n-1})\in\mathbb Z^{n-1}$, where $y_i=\sum_{j=0}^{i-1}x_j-\frac{m}{n}i$ for any $i\in[1,n-1]$. Note that for any $i\in[1,n-1]$ and positive integer $t$ with $t\equiv i\pmod{n}$, we have $y_t=y_i$.
Then it is obvious that $Q$ is an $(n,m)$-Dyck path if and only if
$$y_i>0\text{ holds for all }i\in[1,n-1].$$
As $(n,m)=1$ and $\sum_{j=0}^{i-1}x_j\le m$ for $i\in[1,n-1]$, it can be verified that $y_i\neq y_j$ for any $i\neq j$. Therefore we may denote the unique minimal element as $y_{\lambda}$ for some $\lambda\in[1,n-1]$. Moreover, we define $Q^l$ as a path obtained from the shifted vector
$$(x_l,x_{l+1},\ldots,x_{n-1},x_0,\ldots,x_{l-1}),$$
that is $(i,\sum_{j=0}^{i-1}x_{l+j})$ be the lowest lattice point in the $i$-th column of the path $Q^l$ where $i\in[1,n-1]$. Therefore we have $Q^{n}=Q$. We denote $y_i^l=\sum_{j=0}^{i-1}x_{l+j}-\frac{m}{n}i$. Similarly, $Q^l$ is a Dyck path if and only if
$$y_i^l>0\text{ holds for any }i\in[1,n-1].$$
Note that $y_i^l=y_{i+l}-y_l$. Now we consider the path $Q^{\lambda}$ and therefore $y_i^{\lambda}=y_{i+\lambda}-y_{\lambda}$. By the minimality of $y_{\lambda}$, we have $y_i^{\lambda}>0$ for any $i\in[1,n-1]$. Therefore we obtain a unique $(n,m)$-Dyck path $Q^{\lambda}$ which corresponds to the vector $(x_0,\ldots,x_{n-1})$.

\medskip

Conversely, let $P$ be an $(n,m)$-Dyck path from $(0,0)$ to $(n,m)$, then it is clear that $P$ corresponds to a vector $P=(x_0,\ldots,x_{n-1})$ with $(i,\sum_{j=0}^ix_j)$ be the highest lattice point in the $i$-th column of the path $P$. We assume that
$$\sum_{i=0}^{n-1}x_ii\equiv \alpha\pmod{n}.$$ As $\sum_{i=0}^{n-1}x_i=m$ and $(n,m)=1$, it is easy to see that there is a unique cyclic shift $(x_0',\ldots,x_{n-1}')$ of $(x_0,\ldots,x_{n-1})$ such that $\sum_{i=0}^{n-1}x_i'i\equiv 0\pmod{n}$.
By the first part of the proof, we know that $P$ is the unique Dyck path among other non-trivial cyclic shifts of $P$. Therefore, without loss of generality, we may assume that $P$ satisfies
\begin{equation}\label{cyclic0}
\sum_{i=0}^{n-1}x_ii\equiv 0\pmod{n}.
\end{equation}
As we have mentioned (see Remark \ref{labelG}) that $(x_0,\ldots,x_{n-1})$ corresponds to a sequence $T$ over $G$, though we do not necessarily have $\sigma(T)=0$. We will prove that there is a unique cyclic shift of $(x_0,\ldots,x_{n-1})$ which corresponds to a zero-sum sequence $T'$ over $G$.

By (\ref{cyclic0}) and the fact that $n_1|n$, it is easy to see that
\begin{equation}\label{gcyc2}
\sum_{j=0}^{n_1-1}j\Big(\sum_{0\le i_j\le n_j-1, j\in[1,r]\setminus\{1\}} x_{i_rn_1\cdots n_{r-1}+\cdots+i_2n_1+j}\Big)\equiv 0\pmod{n_1}.
\end{equation}
Therefore (\ref{gsimu1}) holds immediately. At the same time, according to (\ref{gsimu}), we may assume that
\begin{equation}\label{gn21}
\sum_{k=0}^{n_2-1} k\Big(\sum_{0\le i_j\le n_j-1, j\in[1,r]\setminus\{2\}}x_{i_rn_1\cdots n_{r-1}+\cdots+kn_1+i_1}\Big)\equiv \alpha_2\pmod{n_2}.
\end{equation}
Let $T^{2,l}$ be a sequence over $G$ obtained by cyclically shifting $T$ in the following way:
$$T^{2,l}=(x_l,\ldots,x_{n-1},x_0,x_1,\ldots,x_{l-1}).$$
We consider the sequence $T^{2,ln_1}$, then (\ref{gn21}) becomes
\begin{equation}\label{gn22}
\begin{aligned}
&\sum_{k=0}^{n_2-1} k\Big(\sum_{0\le i_j\le n_j-1, j\in[1,r]\setminus\{2\}}x_{i_rn_1\cdots n_{r-1}+\cdots+kn_1+i_1+ln_1}\Big)\\
&\equiv \sum_{k=0}^{n_2-1} (k+l-l)\Big(\sum_{0\le i_j\le n_j-1, j\in[1,r]\setminus\{2\}}x_{i_rn_1\cdots n_{r-1}+\cdots+(k+l)n_1+i_1}\Big)\\
&\equiv \alpha_2-lm\pmod{n_2}.
\end{aligned}
\end{equation}
As $(n_2,m)=1$, there exists a unique $l_2\in[0,n_2-1]$ such that $\alpha_2-l_2m\equiv 0\pmod{n_2}$. Therefore, the sequence $T^{2,l_2n_1}$ satisfies the case for $n_2$ in (\ref{gsimu}). Meanwhile, it is easy but necessary to see that
\begin{equation}\label{gn12}
\begin{aligned}
&\sum_{j=0}^{n_1-1}j\Big(\sum_{0\le i_j\le n_j-1, j\in[1,r]\setminus\{1\}} x_{i_rn_1\cdots n_{r-1}+\cdots+i_2n_1+j+l_2n_1}\Big)\\
&\equiv \sum_{j=0}^{n_1-1}(j+l_2n_1-l_2n_1)\Big(\sum_{0\le i_j\le n_j-1, j\in[1,r]\setminus\{1\}}  x_{i_rn_1\cdots n_{r-1}+\cdots+i_2n_1+j+l_2n_1}\Big)\\
&\equiv \sum_{j=0}^{n_1-1}(j+l_2n_1)\Big(\sum_{0\le i_j\le n_j-1, j\in[1,r]\setminus\{1\}}  x_{i_rn_1\cdots n_{r-1}+\cdots+i_2n_1+j+l_2n_1}\Big)-l_2n_1m\\
&\equiv 0-l_2n_1m\equiv 0\pmod{n_1},
\end{aligned}
\end{equation}
which means that (\ref{gsimu1}) still holds for the sequence $T^{2,l_2n_1}$. For simplicity, we still denote $T^{2,l_2n_1}$ by $(x_0,\ldots,x_{n-1})$. Next, we assume that
\begin{equation}\label{gn31}
\sum_{k=0}^{n_3-1} k\Big(\sum_{0\le i_j\le n_j-1, j\in[1,r]\setminus\{3\}}x_{i_rn_1\cdots n_{r-1}+\cdots+kn_1n_2+i_2n_1+i_1}\Big)\equiv \alpha_3\pmod{n_3}.
\end{equation}
Let $T^{3,l}$ be a sequence over $G$ obtained by cyclically shifting $T^{2,l_2n_1}$ in the following way:
$$T^{3,l}=(x_l,\ldots,x_{n-1},x_0,x_1,\ldots,x_{l-1}).$$
Then for the sequence $T^{3,ln_1n_2}$, (\ref{gn31}) becomes
\begin{equation}\label{gn32}
\begin{aligned}
&\sum_{k=0}^{n_3-1} k\Big(\sum_{0\le i_j\le n_j-1, j\in[1,r]\setminus\{3\}}x_{i_rn_1\cdots n_{r-1}+\cdots+kn_1n_2+i_2n_1+i_1+ln_1n_2}\Big)\\
&\equiv \sum_{k=0}^{n_3-1} (k+l-l)\Big(\sum_{0\le i_j\le n_j-1, j\in[1,r]\setminus\{3\}}x_{i_rn_1\cdots n_{r-1}+\cdots+(k+l)n_1n_2+i_2n_1+i_1}\Big)\\
&\equiv \alpha_3-lm\pmod{n_3}.
\end{aligned}
\end{equation}
As $(n_3,m)=1$, there exists a unique $l_3\in[0,n_3-1]$ such that $\alpha_3-l_3m\equiv 0\pmod{n_3}$. Meanwhile, similar to the above, it is easy to see that, (\ref{gsimu1}) and the case for $n_2$ in (\ref{gsimu}) still hold for the sequence $T^{3,l_3n_1n_2}$. Continuing this process, we will obtain a unique cyclic shift $T^{r,l_rn_1\cdots n_{r-1}}$ of $T$ such that (\ref{gsimu1}) and all cases in (\ref{gsimu}) hold simultaneously. Therefore, $T^{r,l_rn_1\cdots n_{r-1}}$ is a zero-sum sequence over $G$ of length $m$ which corresponds to $(n,m)$-Dyck path $P$. This completes the bijection between $\mathsf M(G,m)$ and $\mathcal{D}_{n,m}$.

For the bijection between $\mathsf M(G,m)$ and $\mathsf M(H,n)$, we use a necklace interpretation. Let $\mathfrak L$ be a necklace of $n+m$ beads with $n$ red beads and $m$ blue beads. Then proceeding in the clockwise direction from a fixed bead $\mathcal D$ (either red or blue), let $z_i$ ($i\in [0,n-1]$) be the number of blue beads between two successive red beads. Then by Remark \ref{labelG}, $(z_0,\ldots,z_{n-1})$ corresponds to a sequence over $G$ of length $m$. By the above proof, there exists a unique cyclic shift $T$ of $(z_0,\ldots,z_{n-1})$ such that $T$ corresponds to a zero-sum sequence over $G$. For example, in Figure \ref{fig1}, the necklace corresponds to the following 7 sequences over $C_7$: $(1,0,2,0,0,1,1)$, $(0,2,0,0,1,1,1),\ldots,(1,1,0,2,0,0,1)$. Moreover, $(0,0,1,1,1,0,2)$ is the only zero-sum sequence among these sequences.
\begin{figure}[ht]
\centering
\includegraphics[scale=0.25]{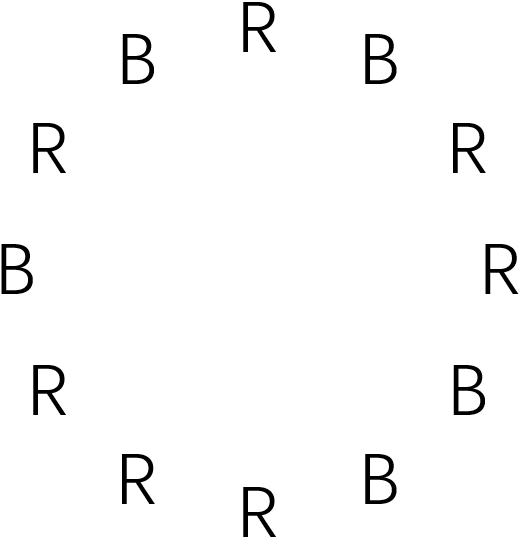}
\caption{A necklace with 7 red (R) beads and 5 blue (B) beads.}
\label{fig1}
\end{figure}

Therefore we have a one-to-one correspondence between all distinct necklaces of $n+m$ beads (with $n$ red beads and $m$ blue beads) and all zero-sum sequences over $G$ of length $m$. Let $S=(x_0,\ldots,x_{n-1})$ be a zero-sum sequence over $G$ of length $m$. Then $S$ corresponds to a necklace $\mathfrak L_S$ of $n+m$ beads with $n$ red beads and $m$ blue beads. Now, proceeding in the clockwise direction around $\mathfrak L_S$ from a fixed bead $\mathcal D$  (either red or blue), let $y_i$ ($i\in[0,m-1]$) be the number of red beads between two successive blue beads. Then we have a vector $T=(y_0,\ldots,y_{m-1})$ with $\sum_{i=0}^{n-1}y_i=n$. By Remark \ref{labelG}, $T$ corresponds to a sequence over $H$ of length $n$, though we do not necessarily have $\sigma(T)=0$. Similar to the above, we can find a unique cyclic shift $T'$ of $(y_0,\ldots,y_{m-1})$ such that $T'$ corresponds to a zero-sum sequence over $H$. For example, in above Figure \ref{fig1}, the necklace corresponds to the following 5 sequences over $C_5$: $(2,0,3,1,1)$, $(0,3,1,1,2),\ldots,(1,2,0,3,1)$. Moreover, $(1,2,0,3,1)$ is the only zero-sum sequence among these sequences. The converse correspondence is similar. This completes the proof of (1).

\medskip

(2) Let $G=C_n^r$ and $H=C_m^r$. Then for $d|n$ we have
$$\varphi_{G}(d)=\sum_{l|d}\mu(\frac{d}{l})l^r,$$
and for $d|m$ we have
$$\varphi_{H}(d)=\sum_{l|d}\mu(\frac{d}{l})l^r.$$
As $(n,m^r)=(n^r,m)$, for any $m\ge 1$ we have
\begin{align*}
\mathsf |M(G,m^r)|&=\frac{1}{n^r+m^r}\sum_{d|(n,m^r)}\varphi_{G}(d)\binom{\frac{n^r+m^r}{d}}{\frac{m^r}{d}}\\
&=\frac{1}{n^r+m^r}\sum_{d|(n^r,m)}\varphi_{H}(d)\binom{\frac{n^r+m^r}{d}}{\frac{n^r}{d}}=\mathsf |M(H,n^r)|.
\end{align*}

\medskip

(3) We assume that $|G|=n$. Let $S$ be a zero-sum subset of $G$ of cardinality $k$. Similarly, $S$ corresponds to a vector $(x_0,\ldots,x_{n-1})$ with $x_i\in\{0,1\}$ and $\sum_{i=0}^nx_i=k$. We shall construct a $(k,n-k)$-Dyck path corresponding to $S$. As $x_i\in\{0,1\}$ for any $i\in[0,n-1]$, we associate $S=(x_0,\ldots,x_{n-1})$ with a lattice path $L_S$ (not necessarily a Dyck path) as follows. Let 0 (resp. 1) represent a unit step $(0,1)$ (resp. $(1,0)$), then $S=(x_0,\ldots,x_{n-1})$ corresponds to a lattice path $L_S$ from $(0,0)$ to $(k,n-k)$ which only uses unit steps $(0,1)$ and $(1,0)$. Similar to the previous method, it is easy to see that there is a unique $(k,n-k)$-Dyck path among the cyclic shifts of $(x_0,\ldots,x_{n-1})$.

Conversely, let $P=(x_0,\ldots,x_{n-1})$ be a $(k,n-k)$-Dyck path with
\[
 x_i= \left\{\begin{array}{ll}&0,\quad \mbox{ if the $(i+1)$-th step uses $(0,1)$,}\\&1,\quad \mbox{ if the $(i+1)$-th step uses $(1,0)$},
\end{array} \right.
\]
where $i\in[0,n-1]$. Then by our previous construction, $(x_0,\ldots,x_{n-1})$ corresponds to a subset of $G$ of cardinality $k$. As $(k,n)=1$, similar to the previous method, it is easy to see that there is a unique cyclic shift $S$ of $(x_0,\ldots,x_{n-1})$ such that $S$ is a zero-sum subset of $G$. This completes the bijection between $\mathsf N(G,k)$ and $\mathcal{D}_{k,n-k}$.

Now we construct the bijection between $\mathsf N(G,k)$ and $\mathsf N(G,n-k)$. Let $S=(x_0,\ldots,x_{n-1})$ be a zero-sum subset of $G$ of cardinality $k$. Let $T=(y_0,\ldots,y_{n-1})$ where $y_i=1-x_i$ for any $i\in[0,n-1]$. Then it is clear that $T$ corresponds to a subset of $G$ of cardinality $n-k$. Similar to the above method, we can find a unique cyclic shift $T'$ of $T$ such that $T'$ corresponds to a zero-sum subset over $G$ of length $n-k$. The converse correspondence is similar. This completes the proof.
\qed

\begin{remark}\label{Johnsonproof}
For positive integers $n,m$ with $(n,m)=1$, in \cite{And}, Anderson provided a well-known and very elegant bijection between the set of $(n,m)$-core partitions and $\mathcal{D}_{n,m}$ using the abacus construction. Later in \cite{John}, while proving a conjecture of Armstrong \cite{AHJ} concerning the average size of the $(n,m)$-core partitions, as a byproduct, Johnson obtained an interesting bijection between $\mathsf M(C_n,m)$ and the set of $(n,m)$-core partitions. He employed the abacus construction and various coordinate changes from the perspective of the Ehrhart theory. Therefore, combining these results, although slightly complicated, one obtains a bijection between $\mathsf M(C_n,m)$ and $\mathcal{D}_{n,m}$. Our proof of Theorem \ref{mainth1}.(1) provides a direct bijection between $\mathsf M(G,m)$ and $\mathcal{D}_{n,m}$ for any finite abelian group $G$ of order $n$, without using the abacus construction.
\end{remark}

Next, we turn to prove Theorem \ref{subset-reci}. Firstly, it is easy to see that if $G$ satisfies $\sum_{g\in G}g=0$, then we have $|\mathsf N(G,k)|=|\mathsf N(G,|G|-k)|$ holds for any $k\in[1,|G|-1]$. We have the following lemma.

\begin{lemma}\label{allsum0}
Let $G$ be a finite abelian group. We have $\sum_{g\in G}g=0$ if and only if $|G|$ is odd or $G\cong C_{n_1}\oplus\cdots\oplus C_{n_r}$ with $r\ge2$ and $2|n_{r-1}$.
\end{lemma}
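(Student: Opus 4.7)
My plan is to compute $S := \sum_{g \in G} g$ by exploiting the involution $g \mapsto -g$ on $G$, reducing the problem to a sum over the 2-torsion subgroup $G[2] = \{g \in G : 2g = 0\}$.

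First I would observe that in the sum $S$, each pair $\{g, -g\}$ with $g \ne -g$ contributes $0$. Hence
\[
S = \sum_{g \in G[2]} g.
\]
This reduces the lemma to analyzing the sum of all elements of the elementary abelian 2-group $G[2]$.

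Next I would identify $G[2]$ using the invariant factor decomposition $G \cong C_{n_1} \oplus \cdots \oplus C_{n_r}$ with $n_1 \mid n_2 \mid \cdots \mid n_r$. Since $n_i \mid n_{i+1}$, the indices $i$ with $n_i$ even form a terminal segment; say $n_{r-k+1}, \ldots, n_r$ are the even ones (and $n_1, \ldots, n_{r-k}$ are odd), so $G[2] \cong C_2^k$ with generators coming from the elements $\tfrac{n_i}{2} e_i$ for $i > r - k$.

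Now I would do a short case analysis on $k$:
\begin{itemize}
\item If $k = 0$, then $G[2] = \{0\}$ and $S = 0$; this corresponds precisely to $|G|$ odd.
\item If $k = 1$, the unique nonzero element of $G[2]$ is $\tfrac{n_r}{2} e_r$, so $S = \tfrac{n_r}{2} e_r \ne 0$; this is the case $r = 1$ with $n_1$ even, or $r \ge 2$ with $n_{r-1}$ odd and $n_r$ even.
\item If $k \ge 2$, then in the sum $\sum_{v \in C_2^k} v$ each coordinate vanishes, since each coordinate takes the value $1$ exactly $2^{k-1}$ times, and $2^{k-1}$ is even for $k \ge 2$. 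Hence $S = 0$; this is precisely the case $r \ge 2$ and $2 \mid n_{r-1}$.
\end{itemize}

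Combining the three cases gives $S = 0$ iff $k = 0$ or $k \ge 2$, i.e., iff $|G|$ is odd, or $r \ge 2$ and $2 \mid n_{r-1}$, as required. The argument is essentially a bookkeeping exercise; the only place where one must be careful is the translation between the parameter $k$ (number of even invariant factors) and the condition on $n_{r-1}$, which uses the divisibility chain $n_1 \mid \cdots \mid n_r$ in an essential way.
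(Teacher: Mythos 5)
Your proposal is correct and follows essentially the same route as the paper: reduce $\sum_{g\in G}g$ to the sum over the $2$-torsion subgroup via the pairing $g\leftrightarrow -g$, identify that subgroup as $C_2^k$ with $k$ the number of even invariant factors, and observe that the sum vanishes exactly when $k\neq 1$. Your write-up is slightly more explicit than the paper's (which asserts $\sum_{h\in C_2^m}h=0$ for $m\ge 2$ without the coordinate count), but the underlying argument is the same.
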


\begin{proof} If $|G|$ is odd, then obviously we have $\sum_{g\in G}g=0$. If $G\cong C_{n_1}\oplus\cdots\oplus C_{n_r}$ with $r\ge2$ and $2|n_{r-1}$, then all elements of order 2 in $G$ forms a subgroup $H$ and $H\cong C_2^m$ with $2\le m\le r$. Consequently, we have $\sum_{g\in G}g=\sum_{h\in H}h=0$. Conversely, we assume that $\sum_{g\in G}g=0$. If $G$ does not satisfy any one of the desired properties, then we must have $G\cong C_{n_1}\oplus\cdots\oplus C_{n_{r-1}}\oplus C_{n_r}$ with $r\ge1$, $2|n_r$, and $(n_1\cdots n_{r-1},2)=1$. It follows that $G$ has only one element of order 2, which we denote by $e$. Consequently, we have $\sum_{g\in G}g=e\neq 0$, which is a contradiction.
\end{proof}

Next, we provide a sufficient condition for $|\mathsf N(G,k)|=|\mathsf N(G,|G|-k)|$ holds when $\sum_{g\in G}g\neq0$. Recall that for any positive integer $m$, $v_2(m)$ is defined as the 2-adic valuation of $m$.

\begin{lemma}\label{v2kv2n}
Let $G\cong C_{n_1}\oplus\cdots\oplus C_{n_r}$ be a finite abelian group and $k$ a positive integer with $k\le |G|-1$. If $v_2(k)<v_2(n_r)$, then we have $|\mathsf N(G,k)|=|\mathsf N(G,|G|-k)|$.
\end{lemma}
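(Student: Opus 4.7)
The plan is to apply the counting formula of Theorem \ref{LiW}.(1) directly and compare $|\mathsf N(G,k)|$ with $|\mathsf N(G,n-k)|$ term by term, where $n=|G|$. For any divisor $d$ of $n$, one has the elementary identity $\binom{n/d}{k/d}=\binom{n/d}{(n-k)/d}$, and $d\mid(n,k)$ if and only if $d\mid(n,n-k)$. Hence the only possible discrepancy between the $d$-th terms of the two sums comes from the signs $(-1)^{k+k/d}$ versus $(-1)^{(n-k)+(n-k)/d}$. Their ratio is $(-1)^{n+n/d}$, so the two $d$-th terms coincide precisely when $n+n/d$ is even.

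Next I would analyze when $n+n/d$ can be odd. If $n$ is odd, then every $d\mid n$ makes $n/d$ odd and $n+n/d$ even, so all terms match automatically (this recovers case (1) of Theorem \ref{subset-reci}). If $n$ is even, then $n+n/d$ is odd if and only if $n/d$ is odd, which happens if and only if $v_2(d)=v_2(n)$. So the potentially ``bad'' divisors in the sum are exactly those $d\mid(n,k)$ satisfying $v_2(d)=v_2(n)$.

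The key step is to rule out such bad divisors under the hypothesis $v_2(k)<v_2(n_r)$. Since $n=n_1\cdots n_r$ with $n_1\mid\cdots\mid n_r$, one has $v_2(n)=v_2(n_1)+\cdots+v_2(n_r)\ge v_2(n_r)$, and combining with the hypothesis gives $v_2(k)<v_2(n)$. If a bad $d$ existed, then $d\mid k$ would force $v_2(n)=v_2(d)\le v_2(k)$, contradicting $v_2(k)<v_2(n)$. Hence no bad $d$ occurs, the two sums agree term by term, and the equality $|\mathsf N(G,k)|=|\mathsf N(G,|G|-k)|$ follows.

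I do not anticipate a genuine obstacle here: once the explicit formula of Theorem \ref{LiW} is in hand, the argument reduces to $2$-adic bookkeeping. The only mild subtlety is noticing that $v_2(n_r)\le v_2(n)$, which lets the hypothesis $v_2(k)<v_2(n_r)$ do precisely the work needed to eliminate the sign-mismatching divisors; this is exactly why the condition is phrased in terms of $n_r$ rather than $n$, matching the cases (2) and (3) in Theorem \ref{subset-reci}.
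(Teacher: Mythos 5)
Your proof is correct, but it takes a genuinely different route from the paper. You work entirely from the explicit counting formula of Theorem \ref{LiW}.(1), observe that $\binom{n/d}{k/d}=\binom{n/d}{(n-k)/d}$ and that the two $d$-th summands differ only by the factor $(-1)^{n+n/d}$, and then use $v_2(k)<v_2(n_r)\le v_2(n)$ to rule out any divisor $d\mid(n,k)$ with $v_2(d)=v_2(n)$; this $2$-adic bookkeeping is sound, and it is in fact the same sign analysis the authors deploy later for the \emph{converse} direction of Theorem \ref{subset-reci}. The paper's own proof of this lemma is instead bijective: after using Lemma \ref{allsum0} to reduce to the case where $G$ has a unique involution $e$ (so $\sum_{g\in G}g=e$), the hypothesis $v_2(k)<v_2(n_r)$ is used to produce $x\in G$ with $kx=e$, and the map $S\mapsto x+(G\setminus S)$ is an explicit bijection between $\mathsf N(G,k)$ and $\mathsf N(G,|G|-k)$. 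Your argument is shorter once Theorem \ref{LiW} is granted and actually establishes the formally stronger sufficient condition $v_2(k)<v_2(n)$ (which is consistent with Theorem \ref{subset-reci}, since $v_2(n)>v_2(n_r)$ forces $2\mid n_{r-1}$, i.e.\ condition (2)); the paper's argument buys an explicit combinatorial bijection, in keeping with the bijective spirit of Theorem \ref{mainth1}. Both are complete proofs.
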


\begin{proof}
By Lemma \ref{allsum0}, we may assume that $G\cong C_{n_1}\oplus\cdots\oplus C_{n_{r-1}}\oplus C_{n_r}$ with $r\ge1$, $2|n_r$, and $(n_1\cdots n_{r-1},2)=1$. Let $e$ be the unique element of order 2 in $G$. Let $S$ be a zero-sum subset of $G$ of length $k$. We claim that $e\in kG=\{kg\ |\ g\in G\}=dG$, where $d=(k,n_r)$. Let $n_r=n_r'd$ and $k=k'd$. Let $u\in G$ such that ord$(u)=n_r$. Since $v_2(k)<v_2(n_r)$, we have $\frac{n_r'}{2}\in\mathbb N$ and consequently  $e=\frac{n_r}{2}u=d\frac{n_r'}{2}u\in dG=kG$. Therefore, there exists $x\in G$ such that $e=kx$. Obviously, we have $\sum_{g\in G\setminus S}g=e$. If $G\setminus S=\{g_1,\ldots,g_{n-k}\}$, we define $x+G\setminus S=\{x+g_1,\ldots,x+g_{n-k}\}$. It is easy to see that $\sum_{g\in x+G\setminus S}=-kx+e=0$, which means that $x+G\setminus S$ is a zero-sum subset of $G$ of length $|G|-k$. Note that the above construction provides a bijection between $\mathsf N(G,k)$ and $\mathsf N(G,|G|-k)$ in this case.
\end{proof}

Now, we are ready to prove Theorem \ref{subset-reci}.

\medskip

{\sl Proof of the Theorem \ref{subset-reci}.} If one of the conditions (1)-(3) holds, by Lemmas \ref{allsum0} and \ref{v2kv2n}, we have $|\mathsf N(G,k)|=|\mathsf N(G,|G|-k)|$. Conversely, if the equality $|\mathsf N(G,k)|=|\mathsf N(G,|G|-k)|$ holds, we have to prove that at least one of the conditions (1)-(3) holds. If (1) or (2) holds, then we are done. So we may assume that both (1) and (2) fail. It follows that
$G\cong C_{n_1}\oplus\cdots\oplus C_{n_{r-1}}\oplus C_{n_r}$ with $r\ge1$, $2|n_r$, and $(n_1\cdots n_{r-1},2)=1$. In this case, we have to show that (3) holds, that is $v_2(k)<v_2(n_r)$. We assume to the contrary that $v_2(k)\ge v_2(n_r)$.

By Theorem \ref{LiW}, it is easy to see that
$$|\mathsf N(G,k)|=\frac{1}{n}\sum_{d|(n,k)}\varphi_G(d)(-1)^{k+\frac{k}{d}}\binom{n/d}{k/d}$$
and
\begin{align*}
|\mathsf N(G,n-k)|&=\frac{1}{n}\sum_{d|(n,n-k)}\varphi_G(d)
(-1)^{n-k+\frac{n-k}{d}}\binom{n/d}{(n-k)/d}\\
&=\frac{1}{n}\sum_{d|(n,k)}\varphi_G(d)
(-1)^{n-k+\frac{n-k}{d}}\binom{n/d}{k/d}.
\end{align*}
Therefore, the only possible difference of these two formulas is the sign of each summand, that is $(-1)^{k+\frac{k}{d}}$ and $(-1)^{n-k+\frac{n-k}{d}}$. Note that, under our assumption that $v_2(k)\ge v_2(n_r)\ge 1$, we have $(-1)^{k+\frac{k}{d}}=(-1)^{\frac{k}{d}}$ and $(-1)^{n-k+\frac{n-k}{d}}=(-1)^{\frac{n-k}{d}}$.

We distinguish two cases.

{\bf Case 1.} $v_2(k)=v_2(n_r)=t$. It is easy to see that, for any $d|(n,k)$, $\frac{n-k}{d}$ is even, which means that all summands in $|\mathsf N(G,|G|-k)|$ are positive. However, when $d=2^t$, $\frac{k}{2^t}$ is odd, which means that at least one summand in $|\mathsf N(G,k)|$ is negative. It follows that $|\mathsf N(G,k)|<|\mathsf N(G,|G|-k)|$, which is a contradiction.

{\bf Case 2.} $v_2(k)>v_2(n_r)=t$. Similarly, it is easy to see that, for any $d|(n,k)$,  $\frac{k}{d}$ is even, which means that all summands in $|\mathsf N(G,k)|$ are positive. However, when $d=2^t$, $\frac{n-k}{2^t}$ is odd, which means that at least one summand in $|\mathsf N(G,|G|-k)|$ is negative. It follows that $|\mathsf N(G,k)|>|\mathsf N(G,|G|-k)|$, which is a contradiction.
\qed

Next, we are going to prove Theorem \ref{mainth2}. We recall (\ref{abel0}) in the following:
$$\mathcal{F}\Big(\big(\mathcal{S}(\mathcal{R})\otimes\wedge(\mathcal{R})\big)_{G};s,t\Big)
=\frac{1}{|G|}\sum_{d\ge1}\Big(\varphi_G(d)\Big(\frac{1-(-t)^d}{1-s^d}\Big)^{|G|/d}\Big).$$

{\sl Proof of the Theorem \ref{mainth2}.} By (\ref{abel0}) and after extracting the coefficient of $s^pt^m$, we obtain that
\begin{align*}
&\dim\big(\mathcal{S}^p(\mathcal{R})\otimes\wedge^m(\mathcal{R})\big)_{G_{q+m}}\\
&=\frac{1}{p+q+m}\sum_{d|(p,q,m)}(-1)^{m+\frac{m}{d}}\varphi_{G_{q+m}}(d)\binom{(p+q+m)/d}{p/d,q/d,m/d}
\end{align*}
and
\begin{align*}
&\dim\big(\mathcal{S}^q(\tilde{\mathcal{R}})\otimes\wedge^m(\tilde{\mathcal{R}})\big)_{H_{p+m}}\\
&=\frac{1}{p+q+m}\sum_{d|(p,q,m)}(-1)^{m+\frac{m}{d}}\varphi_{H_{p+m}}(d)\binom{(p+q+m)/d}{p/d,q/d,m/d}.
\end{align*}
Under the assumption that $(p,q,m)=1$, the desired result follows immediately.\qed

\begin{remark} Panyushev \cite{Pan} asked for a combinatorial interpretation of (\ref{Panyushev}). Here we will provide one under a further assumption that $(p,q+m)=(q,p+m)=1$. In this case, by the definition of the symmetric tensor exterior algebra, it is easy to see that calculating
$$\dim\big(\mathcal{S}^p(\mathcal{R})\otimes\wedge^m(\mathcal{R})\big)_{G_{q+m}}$$ is equivalent to counting the number of pairs
\begin{equation}\label{AB}
(A,B),
\end{equation}
where $A$ is a sequence over $G_{q+m}$ of length $p$ and $B$ is a subset of $G_{q+m}$ of cardinality $m$ such that $\sigma(A)+\sigma(B)=0$ in $G_{q+m}$. Note that
$$\frac{1}{p+q+m}\binom{p+q+m}{p,q,m}=\frac{1}{p+q+m}\binom{p+q+m}{p}\binom{q+m}{m}.$$
Suppose that $(S,T)$ is a pair satisfying the above condition (\ref{AB}). In order to provide the bijection, we have to assign $(S,T)$ to a unique pair $(U,V)$ where $U$ is a sequence over $H_{p+m}$ of length $q$ and $V$ is a subset of $H_{p+m}$ of length $m$, such that $\sigma(U)+\sigma(V)=0$ in $H_{p+m}$.

Firstly, we construct an uncolored necklace $\mathfrak{L}$ of $p+q+m$ beads with a fixed bead which we denote by $\mathfrak{D}$ (as starting point). Based on Remark \ref{important}, we may assume that $S=(x_0,\ldots,x_{q+m-1})$ and $T=(y_0,\ldots,y_{q+m-1})$. Now, based on $(S,T)$, we color $\mathfrak{L}$ with three colors: red, green, and blue to obtain a new necklace $\mathfrak{L}_{S,T}$. The resulting $\mathfrak{L}_{S,T}$ is a necklace with $p+q+m$ beads ($p$ red beads, $m$ green beads, and $q$ blue beads) and these beads are located as follows. Proceeding in the clockwise direction from $\mathfrak{D}$, let
$$x_0,\ldots,x_{q+m-1}$$
be the number of red beads between two successive green or blue beads. Also proceeding in the clockwise direction from $\mathfrak{D}$, among these $q+m$ green or blue beads, the $i$th bead is green if $y_{i-1}=1$ and blue otherwise for $0\le i\le q+m-1$.

Proceeding in the clockwise direction from $\mathfrak{D}$, we define
$$W=(u_0,\ldots,u_{p+m-1}),$$
where the $u_i$'s are the number of blue beads between two successive red or green beads. Also proceeding in the clockwise direction from $\mathfrak{D}$, we define
$$V=(v_0,\ldots,v_{p+m-1}),$$
where
$v_{i-1}=1$ if the $i$-th bead is green among those $p+m$ red or green beads and 0 otherwise, for $1\le i\le p+m$. Then $V$ corresponds to a subset of $H_{p+m}$ of cardinality $m$. Consequently, similar to the above proofs, there is a unique cyclic shift $W^l$ of $W$ such that $\sigma(W^l)+\sigma(V)=0$. Let $U=W^l$, then $(U,V)$ is exactly what we want. As we have a fixed bead $\mathfrak{D}$ and we only allow the corresponding sequences (the number of blue beads between two successive red or green beads) to be shifted (the relative positions of red and green beads have not changed), it is easy to see that, in this way, $(U,V)$ is the unique pair corresponding to $(S,T)$. The converse correspondence is similar. This completes the bijection.
\end{remark}

\section{Concluding remarks}

Recently, the interplay of zero-sum theory with invariant theory and with factorization theory (see \cite{Gero,GeroZho}) has become a very interesting and active research field. So far, in most cases, techniques and useful concepts of zero-sum theory (or more generally, arithmetic combinatorics) have been very successfully used in the study of invariant theory and factorization theory (see \cite{CDG} for a very nice exposition). While, powerful techniques and ideas in invariant theory and factorization theory are less often used in the study of zero-sum theory (or arithmetic combinatorics). We mention some recent studies in this direction. Fan and Tringali \cite{FanSalvo} provided a very good example of using ideas from factorization theory to study arithmetic combinatorics. A very recent paper of Bashir, Geroldinger and Zhong \cite{BaGeZh} studied an interesting zero-sum problem arising from factorization theory. Domokos \cite{Domo} provided a new and very interesting perspective to look at zero-sum theory and factorization theory from a invariant theoretic point of view.

The study of this paper was partially motivated by some studies related to the Poincar\'e series (\ref{Almkvist}), which is an important and very useful tool in invariant theory. Besides leading to a different proof of Theorem \ref{LiW}, the idea used in the proof of Lemma \ref{Panlemma} also has the potential to be used to deal with some other similar counting problems in arithmetic combinatorics. We hope to further employ methods and ideas from invariant theory and factorization theory to discover more and more interesting and useful results in zero-sum theory and arithmetic combinatorics.

Rational Catalan combinatorics studies rational generalizations of combinatorial objects which are counted by the Catalan number $\mathsf{Cat}_n$. It is known \cite{Stan1} that there are at least 200 distinct families of combinatorial objects counted by $\mathsf{Cat}_n$. While, so far, the number of families of combinatorial objects counted by $\mathsf {Cat}_{n,m}$ is much less. Note that, $\mathsf {Cat}_{n,m}$ is not just a simple generalization of the classical one. In some cases, the symmetry of $n$ and $m$ in $\mathsf {Cat}_{n,m}$ leads to some surprising and unexpected symmetry or duality, we refer to, e.g., \cite{ARW} for a discussion. Our study (Theorem \ref{mainth1}) provides a new interpretation for the rational Catalan numbers, which indicates that there exists a special symmetry (i.e., $|\mathsf M(G,|H|)|=|\mathsf M(H,|G|)|$) between any two finite abelian groups of co-prime cardinality (irrelevant to their group structures). Our bijective proof, passing through the rational Dyck paths, provides an intuitive interpretation of this symmetry. The following result shows that the reciprocity $|\mathsf M(G,|H|)|=|\mathsf M(H,|G|)|$ does not always hold for two general finite abelian groups $G$ and $H$.

\begin{proposition}\label{GCp}
Let $G\cong C_{n_1}\oplus\cdots\oplus C_{n_r}$ be a finite abelian group of order $n$. Let $p$ be a prime and $C_p$ the cyclic group of order $p$. Then we have $|\mathsf M(G,|C_p|)|=|\mathsf M(C_p,|G|)|$ if and only if $(n_1\cdots n_{r-1},p)=1$.
\end{proposition}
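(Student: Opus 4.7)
The plan is to apply the counting formula for $|\mathsf M(G,m)|$ in Theorem \ref{LiW} to both sides of the proposed equality and compare them term by term. First I would dispose of the case $(n,p)=1$: here $(|G|,|C_p|)=1$, so Theorem \ref{mainth1}.(1) already gives $|\mathsf M(G,p)|=|\mathsf M(C_p,n)|$, and since $p\nmid n$ forces $p\nmid n_1\cdots n_{r-1}$, the proposition holds trivially.

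Hence I may assume $p\mid n$. Since $n_1\mid\cdots\mid n_r$ and $p$ is prime, this forces $p\mid n_r$, and the positive divisors of $(n,p)$ are exactly $1$ and $p$. Applying Theorem \ref{LiW} then yields
\begin{equation*}
|\mathsf M(G,p)|=\frac{1}{n+p}\left[\binom{n+p}{n}+\varphi_G(p)\binom{n/p+1}{n/p}\right],
\end{equation*}
together with the analogous formula for $|\mathsf M(C_p,n)|$ in which $\varphi_G(p)$ is replaced by $\varphi_{C_p}(p)$. The $d=1$ contributions of the two sums coincide and $\binom{n/p+1}{n/p}=n/p+1>0$, so the desired equality reduces to the single identity $\varphi_G(p)=\varphi_{C_p}(p)$.

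Next I would evaluate both sides using (\ref{varGd}). Setting $s=|\{i\in[1,r]:p\mid n_i\}|$ and observing that $(n_i,p)\in\{1,p\}$, one computes $\varphi_G(p)=\prod_{i=1}^r(n_i,p)-1=p^s-1$, while $\varphi_{C_p}(p)=p-1$. Since $p\mid n_r$ forces $s\geq 1$, the equation $\varphi_G(p)=\varphi_{C_p}(p)$ becomes $s=1$, which says precisely that $p\nmid n_i$ for every $i<r$, i.e., $(n_1\cdots n_{r-1},p)=1$. This settles both directions of the proposition at once; moreover, when $s\geq 2$ the explicit formula above shows $|\mathsf M(G,p)|>|\mathsf M(C_p,n)|$ strictly, so the failure is genuine rather than an accidental cancellation.

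The argument is essentially a mechanical unpacking of Theorem \ref{LiW}, so no substantive obstacle appears. The only delicate step is the evaluation of $\varphi_G(p)$ via the M\"obius-type formula (\ref{varGd}), where the product $\prod_i(n_i,l)$ at $l=p$ must be seen to collapse to $p^s$; this reduces the entire reciprocity question to a simple arithmetic condition on the Sylow $p$-component of $G$.
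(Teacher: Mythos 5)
Your proposal is correct and follows essentially the same route as the paper: apply Theorem \ref{LiW}, note that only the $d=p$ terms can differ, and compare $\varphi_G(p)$ with $\varphi_{C_p}(p)$. Your explicit evaluation $\varphi_G(p)=p^s-1$ is a slightly cleaner way to package the paper's two-case argument (the paper only uses the bound $\varphi_G(p)\ge p^2-1$ in the converse direction), but the substance is identical.
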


\begin{proof}
By Theorem \ref{LiW}, we have
$$|\mathsf M(G,p)|=\frac{1}{n+p}\sum_{d|(n,p)}\varphi_G(d)\binom{n/d+p/d}{n/d}$$
and
$$|\mathsf M(C_p,n)|=\frac{1}{n+p}\sum_{d|(n,p)}\varphi_{C_p}(d)\binom{n/d+p/d}{n/d}$$
where $\varphi_G(d)=\sum_{l|d}\mu(d/l)\prod_{i=1}^r(n_i,l)$ and $\varphi_{C_p}(d)=\sum_{l|d}\mu(d/l)(p,l)$. We first assume that $(n_1\cdots n_{r-1},p)=1$. By Theorem \ref{mainth1}, it suffices to consider the case when $p|n_r$. In this case, we have
$$|\mathsf M(G,p)|=\frac{1}{n+p}(\binom{n+p}{n}+(p-1)\binom{n/p+1}{n/p})$$
and
$$|\mathsf M(C_p,n)|=\frac{1}{n+p}(\binom{n+p}{n}+(p-1)\binom{n/p+1}{n/p}).$$
Therefore we have $|\mathsf M(G,|C_p|)|=|\mathsf M(C_p,|G|)|$.

Conversely, we assume that $|\mathsf M(G,|C_p|)|=|\mathsf M(C_p,|G|)|$. If $(n_1\cdots n_{r-1},p)=p$, then clearly we have $\varphi_G(p)\ge p^2-1>p-1=\varphi_{C_p}(p)$. Then we have
$$|\mathsf M(G,p)|=\frac{1}{n+p}(\binom{n+p}{n}
+\varphi_G(p)\binom{n/p+1}{n/p})$$
and
$$|\mathsf M(C_p,n)|=\frac{1}{n+p}(\binom{n+p}{n}+\varphi_{C_p}(p)\binom{n/p+1}{n/p}).$$
Consequently we have $|\mathsf M(G,|C_p|)|>|\mathsf M(C_p,|G|)|$, which is a contradiction.
\end{proof}

Based on Theorems \ref{mainth1} and \ref{subset-reci} and Proposition \ref{GCp}, it is natural to propose the following problem for future study.

\begin{problem}
Let $G$ and $H$ be finite abelian groups. What are the necessary and sufficient conditions (on $G$ and $H$) for $|\mathsf M(G,|H|)|=|\mathsf M(H,|G|)|$ holds?
\end{problem}

\subsection*{Acknowledgments}

We are deeply indebted to the referee who provided us with many helpful comments. In particular, Theorem \ref{subset-reci} is motivated by an insightful comment of the referee. A part of this work was started during a visit by the second author to the Karl-Franzens-Universit\"{a}t Graz in the spring semester 2019, he would like to thank Prof. Alfred Geroldinger for invitation and providing him with a wonderful working environment, as well as many helpful comments on the manuscript. We thank Dr. Qinghai Zhong for helpful discussions. We are also grateful to our advisor Prof. Weidong Gao for his support and encouragement all the time. D.C. Han was supported by the National Science Foundation of China Grant No.11601448. H.B. Zhang was supported by the National Science Foundation of China Grant No.11901563.

\bibliographystyle{amsplain}

\end{document}